\tikzset{neg/.style={
		decoration={markings,
			mark= at position 0.5 with {
				\node[transform shape] (tempnode) {$\setminus$};
				%\draw[thick] (tempnode.north east) -- (tempnode.south west);
			}
		},
		postaction={decorate}
}}
\numberwithin{equation}{section}
\newcommand{\MAT}{\left[ \begin{array}}  %\MAT{ccc}   ... \mat
	\newcommand{\mat}{\end{array} \right]}
\newtheorem{theorem}{Theorem}[section]
\newtheorem{corollary}[theorem]{Corollary}
\newtheorem{lemma}[theorem]{Lemma}
\theoremstyle{definition}
\newtheorem*{definition*}{Definition}
\newtheorem*{proposition*}{Proposition}
\newtheorem*{theorem*}{Theorem}
\newtheorem*{corollary*}{Corollary}
\newtheorem*{example*}{Example}
\newtheorem*{problem*}{Problem}
\theoremstyle{remark}
\newtheorem{remark}[theorem]{Remark}
\newcommand*{\prob}[1]{\mathbb{P}\left\{ #1 \right\}}
\def \ex{\mathbb{E}}
\newcommand{\interior}[1]{%
	{\kern0pt#1}^{\mathrm{o}}%
}
\title{Fast Dimensionality Reduction from $\ell_2$ to $\ell_p$}
\author{Rafael Chiclana\thanks{Michigan State University, Department of Mathematics, \emph{chiclan1@msu.edu}.} \and Mark Iwen\thanks{Michigan State University, Department of Mathematics, and Department of Computational Mathematics, Science and Engineering (CMSE), \emph{iwenmark@msu.edu}.  Supported in part by NSF DMS 2106472.}}
\begin{document}

\maketitle

\begin{abstract}
    The Johnson-Lindenstrauss (JL) lemma is a fundamental result in dimensionality reduction, ensuring that any finite set $X \subseteq \mathbb{R}^d$ can be embedded into a lower-dimensional space $\mathbb{R}^k$ while approximately preserving all pairwise Euclidean distances. In recent years, embeddings that preserve Euclidean distances when measured via the $\ell_1$ norm in the target space have received increasing attention due to their relevance in applications such as nearest neighbor search in high dimensions. A recent breakthrough by Dirksen, Mendelson, and Stollenwerk established an optimal $\ell_2 \to \ell_1$ embedding with computational complexity $\mathcal{O}(d \log d)$. In this work, we generalize this direction and propose a simple linear embedding from \( \ell_2 \) to \( \ell_p \) for any \( p \in [1,2] \) based on a construction of Ailon and Liberty. Our method achieves a reduced runtime of \( \mathcal{O}(d \log k) \) when \( k \leq d^{1/4} \), improving upon prior runtime results when the target dimension is small. Additionally, we show that for \emph{any norm} $\|\cdot\|$ in the target space, any embedding of $(\mathbb{R}^d, \|\cdot\|_2)$ into $(\mathbb{R}^k, \|\cdot\|)$ with distortion $\varepsilon$ generally requires $k = \Omega\big(\varepsilon^{-2} \log(\varepsilon^2 n)/\log(1/\varepsilon)\big)$, matching the optimal bound for the $\ell_2$ case up to a logarithmic factor.
\end{abstract}

	\section{Introduction}\label{sec 1}
	
	The Johnson-Lindenstrauss (JL) lemma is a cornerstone of dimensionality reduction, enabling the embedding of high-dimensional datasets into a lower-dimensional space while approximately preserving all of their pairwise Euclidean distances. Specifically, the JL lemma states that for any \(\varepsilon \in (0, 1)\) and finite set \(X \subseteq \mathbb{R}^d\) with \(n > 1\) elements, there exists a matrix \(\Phi \in \mathbb{R}^{k \times d}\) with \(k = \mathcal{O}(\varepsilon^{-2} \log n)\) such that
	\begin{equation}
	(1 - \varepsilon) \|{  x} - {  y}\|_2 \leq \|\Phi {  x} - \Phi {  y}\|_2 \leq (1 + \varepsilon) \|{  x} - {  y}\|_2 \quad \forall \, {  x}, {  y} \in X.
	\label{Equ:JLprop}
	\end{equation}
	Moreover, it has been shown that the dimension $k$ of the Euclidean space where $X$ is embedded above is optimal \cite{larsen2017optimality}. Remarkably, a matrix \(\Phi\) with independent Gaussian entries achieves this property with high probability. However, Gaussian matrices are computationally expensive to store and to multiply against extremely long vectors. This limitation has inspired extensive research into structured random matrices that admit fast matrix-vector multiplication algorithms while preserving the desirable JL-properties of Gaussian embeddings (see, e.g., \cite{Ailon2008fast,ailon2009the,krahmer2011new,bourgain2015toward,iwen2024on} among many others).
    
	The $\ell_p$ norms for $p \in [1,2)$ are particularly meaningful in applications such as nearest neighbor search, compressed sensing, and machine learning, where they offer varying degrees of robustness to outliers and noise, as well as the ability to promote sparsity in solutions \cite{Foucart2024,ailon2009the,Beyer1999when,Hinneburg2000what}. The $\ell_1$ norm provides maximum robustness, while $\ell_p$ norms with $p > 1$ offer a continuum between $\ell_1$ and $\ell_2$ properties.  A recent result by Dirksen, Mendelson, and Stollenwerk provides an optimal embedding from $\ell_2$ into $\ell_1$ using structured random circulant matrices, achieving both the optimal embedding dimension and a $\mathcal{O}(d \log d)$ computational complexity \cite{Dirksen2024fast}. In this work we complement their construction by showing that a simplified version of an embedding originally proposed by Ailon and Liberty in \cite{Ailon2008fast} for the $\ell_2$ setting can achieve a slightly improved $\mathcal{O}(d \log k)$ runtime for embeddings from $\ell_2$ into $\ell_p$ for any $p \in [1,2]$, provided that the embedding dimension satisfies $k\leq d^{1/4}$. Note that this improvement in the computational cost is most significant when $k$ is substantially smaller than $d$, making the limiting assumption that $k\leq d^{1/4}$ harmless in practice.\footnote{In fact, by utilizing Ailon and Liberty's original construction \cite{Ailon2008fast} one achieves the same $\ell_2$ into $\ell_1$ embedding result for all $k \lesssim d^{1/2}$.  However, the additional complication of their original construction only provides marginal value these days given that Dirksen et al.'s nice embedding result \cite{Dirksen2024fast} already achieves the same effective runtime complexity for all $k \geq d^{1/4}$.} Our main result is stated below.
    
	\begin{theorem}\label{theo: main}  
		Let \(\varepsilon, \rho \in (0, 1)\), $p \in [1,2]$, and \(X \subseteq \mathbb{R}^d\) be a finite set with \(|X| = n\). Suppose the embedding dimension \(k\) satisfies  
		\[
		k \geq \varepsilon^{-2}\max \left \{50C_0,216\log\left( \frac{6n^2}{\rho} \right) \right \} \quad \text{and} \quad k \leq d^{1/4},
		\]  
		where $C_0$ is the absolute constant in Lemma \ref{lem: berry-esseen}. Then, with probability at least \(1 - \rho\), the linear map \(\Psi \colon \mathbb{R}^d \to \mathbb{R}^k\), defined in \eqref{eq: def embedding}, can be applied to any input vector in \(\mathcal{O}(d \log k)\) time and satisfies  
		\[
		\left| \|\Psi({  x}) - \Psi({  y})\|_p - \|{  x} - {  y}\|_2 \right| \leq \varepsilon \|{  x} - {  y}\|_2 \quad \text{for all } {  x}, {  y} \in X.
		\]
	\end{theorem}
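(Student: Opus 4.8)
The plan is to follow the standard two-pronged strategy for fast JL-type embeddings: first precondition the data with a randomized Fourier/Hadamard transform so that every difference vector $x-y$, after rescaling, becomes "flat" (small $\ell_\infty$-to-$\ell_2$ ratio) with high probability, and then apply a sparse or otherwise cheap random projection to that flattened vector, for which a Bernstein-type concentration argument gives the $\ell_2 \to \ell_p$ norm preservation. Concretely, write $\Psi = \tfrac{1}{c_p\sqrt k}\, B\, H\, D$ (the Ailon--Liberty-style construction referenced in \eqref{eq: def embedding}), where $D$ is a random $\pm1$ diagonal, $H$ is a (partial) Walsh--Hadamard transform, $B$ is the cheap second stage, and $c_p$ is the normalizing constant $\ex|g|^p$ for a standard Gaussian $g$ chosen so that $\|\cdot\|_p$ of the image is calibrated to $\|\cdot\|_2$ of the input. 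Since $\Psi$ is linear, it suffices to prove the one-sided statement $\big|\,\|\Psi z\|_p - \|z\|_2\,\big| \le \varepsilon\|z\|_2$ for a fixed unit vector $z$, then union bound over the at most $\binom n2 \le n^2/2$ normalized difference vectors $z = (x-y)/\|x-y\|_2$; this is where the $\log(n^2/\rho)$ term in the lower bound on $k$ comes from, and the factor $6$ and the $216$ are there to absorb the constants in the two concentration steps and the failure probabilities of the flattening step.

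The first key step is the \emph{flattening lemma}: for fixed unit $z$, the vector $w = HDz$ satisfies $\|w\|_2 = 1$ and, with probability at least $1 - (\text{small})$, $\|w\|_\infty \lesssim \sqrt{\log(d/\rho)/d}$ — this is a Khintchine/Hoeffding bound on each coordinate $w_i = \sum_j H_{ij}\varepsilon_j z_j$ followed by a union bound over $d$ coordinates. The hypothesis $k \le d^{1/4}$ is what makes this flattening "strong enough" relative to $k$: it guarantees $\|w\|_\infty$ is small enough that the second-stage projection of $w$ behaves, coordinate by coordinate, like the projection of a perfectly spread-out vector. The second key step is to condition on $w$ being flat and analyze $\|\tfrac{1}{c_p \sqrt k} Bw\|_p^p = \tfrac{1}{c_p k}\sum_{\ell=1}^k \big|\langle b_\ell, w\rangle \sqrt k\big|^p$ (up to normalization bookkeeping): each $\langle b_\ell, w\rangle$ is, by the Berry--Esseen bound of Lemma~\ref{lem: berry-esseen} applied with the flatness of $w$ controlling the third-moment error term, close in distribution to $\mathcal N(0,1/k)$, so $\ex|\langle b_\ell,w\rangle\sqrt k|^p \approx c_p$ with an error controlled by $C_0\|w\|_\infty/\|w\|_2 \le C_0\sqrt{\log(d/\rho)/d}$, which the $k \le d^{1/4}$ assumption makes $\lesssim \varepsilon$; then a bounded-differences or Bernstein inequality over the $k$ independent rows $b_\ell$ gives that $\tfrac{1}{k}\sum_\ell |\langle b_\ell,w\rangle\sqrt k|^p$ concentrates around its mean $\approx c_p$ within relative error $\varepsilon$ with probability $1 - \rho/n^2$, using $k \gtrsim \varepsilon^{-2}\log(n^2/\rho)$.

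Finally I would convert the estimate on the $p$-th power back to the norm itself: from $\big|\,\|\Psi z\|_p^p - 1\,\big| \le \varepsilon'$ one gets $\big|\,\|\Psi z\|_p - 1\,\big| \le \varepsilon$ by the elementary inequality $|t^{1/p}-1| \le |t-1|$ valid for $p\ge1$ and $t \ge 0$ (or its local version near $t=1$), so no loss of constants beyond what is already budgeted. Taking a union bound over all three bad events — flattening fails, or the $\ell_p$-power concentration fails, for some pair — and over all $\le n^2/2$ pairs yields total failure probability at most $\rho$, and collecting the constants from Khintchine, Berry--Esseen ($C_0$, hence the $50C_0$ term guaranteeing the Berry--Esseen error is $\le \varepsilon/(\text{const})$), and Bernstein produces exactly the stated threshold $k \ge \varepsilon^{-2}\max\{50C_0, 216\log(6n^2/\rho)\}$. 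The runtime claim is immediate from the structure of $\Psi$: applying $D$ costs $\mathcal O(d)$, the partial Hadamard transform producing only the coordinates needed costs $\mathcal O(d\log k)$ via a pruned FFT-type recursion (this is the Ailon--Liberty trick and the reason $k \le d^{1/4}$ rather than $k \le \sqrt d$ appears — the simplified analysis needs the stronger gap), and the sparse/cheap stage $B$ costs $\mathcal O(d)$ or less. The main obstacle I anticipate is making the Berry--Esseen step quantitatively tight enough: one must verify that the third-moment error term, which scales like $\|w\|_\infty/\|w\|_2$, is genuinely controlled by the flatness bound under only $k \le d^{1/4}$, and that this error, after being raised to the $p$-th power and summed over $k$ coordinates, does not accumulate — this is precisely where the interplay between the flattening strength ($\sim d^{-1/2}$), the target dimension ($k \le d^{1/4}$), and the distortion $\varepsilon$ must be balanced carefully.
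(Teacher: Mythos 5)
There is a genuine gap, and it comes in two related places. First, the theorem is a statement about the specific map $\Psi$ of \eqref{eq: def embedding}, namely $k^{-1/p}\beta_p^{-1}AD_1HD_2HD_3$, where $A$ is a (deterministic, structured) $4$-wise independent matrix and the only randomness in the projection stage is the single Rademacher diagonal $D_1$; your proposal instead analyzes a one-layer construction $\tfrac{1}{c_p\sqrt k}BHD$ with $\ell_\infty$-flattening and a second stage $B$ with ``$k$ independent rows.'' That is a different map: the paper deliberately uses \emph{two} Hadamard--Rademacher layers because its analysis tracks $\|\cdot\|_4$ (via Lemma \ref{lem: talagrand hadamard}), and one layer only yields $\|HDx\|_4=\mathcal{O}(k^{1/2}d^{-1/4})$, which is not strong enough; the explicit constants $50C_0$, $216$, $6$ in the statement are produced by that specific two-stage, $\ell_4$-based argument, so ``collecting constants'' from your route (which also picks up $\log(d/\rho)$ factors and a hidden restriction on $\rho$ versus $d$ from the coordinatewise union bound) cannot reproduce the stated threshold.

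Second, and more seriously, the concentration step ``Bernstein or bounded differences over the $k$ independent rows $b_\ell$'' is invalid for the actual construction: the coordinates of $AD_1w$ are $\langle A_{(\ell)},\xi\odot w\rangle$ for $\ell=1,\dots,k$, all functions of the \emph{same} Rademacher vector $\xi$, and the rows of a $4$-wise independent $A$ are themselves strongly dependent, so there is no independence across coordinates to exploit. The paper's way around this is the missing idea: view $\|k^{-1/p}AD_1w\|_p$ as a convex Lipschitz function of $\xi$, apply Talagrand's inequality (Theorem \ref{theo: talagrand}), and control the Lipschitz constant by $k^{-1/2}\|M\|_{2\to2}\le k^{-1/2}\|w\|_4\|A^T\|_{2\to4}\le k^{-1/2}\|w\|_4(3d)^{1/4}$ using the Ailon--Liberty bound for $4$-wise independent matrices (Lemma \ref{lem: Ailon sigma}); the expectation is then handled by the Berry--Esseen argument with error $\lesssim\|w\|_3^3\le\|w\|_4^2$. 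Your Berry--Esseen expectation step and the final ingredients — the union bound over the at most $n^2$ normalized differences and the $\mathcal{O}(d\log k)$ runtime via the pruned Hadamard/$4$-wise independent multiplication of Ailon--Liberty — do match the paper (indeed, the paper's proof of this theorem is exactly that union bound applied to Theorem \ref{theo: main 2} plus the runtime citation), but without the Talagrand-plus-$\|A^T\|_{2\to4}$ mechanism, the core pointwise concentration for the map actually named in the statement is not established.
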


	The embedding $\Psi \colon \mathbb{R}^d \longrightarrow \mathbb{R}^k$ in Theorem~\ref{theo: main} is defined by
	\begin{equation}\label{eq: def embedding}
	\Psi {  x} = k^{-1/p} \beta_p^{-1} AD_1HD_2HD_3 {  x} \quad \forall \, {  x} \in \mathbb{R}^d,
	\end{equation}
	where \(A\) is a \(k \times d\) \(4\)-wise independent matrix, $H$ is the $d\times d$ Hadamard-Walsh matrix, and $D_1, D_2, D_3$ are independent copies of a $d\times d$ diagonal matrix with independent Rademacher entries (i.e., each diagonal entry is $\pm1$ with equal probability). The scaling factor $k^{-1/p}\beta_p^{-1}$, where $\beta_p^p = \mathbb{E}|Z|^p$ for $Z \sim \mathcal{N}(0,1)$, ensures proper normalization. This construction is a simplified version of an earlier embedding proposed by Ailon and Liberty \cite{Ailon2008fast} for the \(\ell_2\) norm, adapted here for the $\ell_p$ setting.
	
	Additionally, we establish a general lower bound on the target dimension $k$ for embeddings that preserve Euclidean distances when measured via an arbitrary norm in the target space.
	Let $\|\cdot\|$ be any norm on $\mathbb{R}^k$, $X \subseteq \mathbb{R}^d$ be any $n$-point subset, and suppose that $f \colon \mathbb{R}^d \to \mathbb{R}^k$ satisfies
\begin{equation}\label{eq: bilip}
		(1-\varepsilon)\|x-y\|_2 \leq \|f(x)-f(y)\| \leq (1+\varepsilon)\|x-y\|_2
	\end{equation}
	for all $x,y$ in $X$.  When $\|\cdot\|$ is the Euclidean norm, Larsen and Nelson \cite{dirksen2016dimensionality} proved the optimal bound $k = \Omega(\varepsilon^{-2} \log(\varepsilon^2 n))$ must generally hold for all  $\varepsilon \in (\log^{0.5001} n / \sqrt{\min\{n,d\}}, 1)$, which is almost the full meaningful range of $\varepsilon$. Their argument exploits the geometric structure induced by inner products in the Euclidean setting, however, making its extension to general norms challenging. Nonetheless, by adapting their argument we are able to obtain a lower bound for arbitrary norms that's both valid for the full range of $\varepsilon$, and optimal up to a logarithmic factor in $1/\varepsilon$.
	
	\begin{theorem}\label{theo: lower bound}
		Let $\|\cdot\|$ be any norm on $\mathbb{R}^k$. Suppose that for every $n$-point set $X \subseteq \mathbb{R}^d$ there exists a map $f_X \colon X \longrightarrow \mathbb{R}^k$
		satisfying
		\[
		(1-\varepsilon)\|x-y\|_2 \le \|f_X(x)-f_X(y)\| \le (1+\varepsilon)\|x-y\|_2 \quad \forall x,y \in X,
		\]
		with $\varepsilon \in \Big(\sqrt{\log n}/\sqrt{d}, 1\Big)$. 
		Then, the target dimension $k$ must satisfy
		\[
		k = \Omega\Big( \frac{\log(\varepsilon^2 n)}{\varepsilon^2 \log(1/\varepsilon)} \Big).
		\]
	\end{theorem}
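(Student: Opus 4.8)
The plan is to run an encoding (counting) argument in the spirit of Larsen and Nelson, but arranged so that every use of Euclidean geometry happens in the \emph{source} space $(\mathbb{R}^d,\|\cdot\|_2)$, where we are completely free, while the \emph{target} $(\mathbb{R}^k,\|\cdot\|)$ is touched only through properties valid for an arbitrary norm: the triangle inequality and the trivial volumetric bound on the size of a net of a norm ball. Concretely, I will exhibit a family $\mathcal{F}$ of $n$-point subsets of $\mathbb{R}^d$ such that (i) $\log|\mathcal{F}|$ is of order $\varepsilon^{-2} n \log(\varepsilon^2 n)$, and (ii) from any distortion-$\varepsilon$ embedding of $X\in\mathcal{F}$ into $(\mathbb{R}^k,\|\cdot\|)$ one can recover $X$ exactly, even after rounding the embedding to a fine net. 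A counting bound on the number of rounded configurations in $\mathbb{R}^k$ then forces $k$ to be large.

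For the family, fix $M \asymp \varepsilon^{-2}$ (with a small implied constant) and a set of anchor points $q_1,\dots,q_N \in \mathbb{R}^d$ in (near-)general position --- e.g.\ the first $N$ standard basis vectors when $N\le d$, or $N$ independent normalized Gaussian vectors more generally --- with $N$ taken as large as the hypothesis on $\varepsilon$ permits, and $N=\Theta(n)$ in the main regime. To a tuple $\mathcal{S}=(S_1,\dots,S_n)$ of $M$-element subsets $S_i\subseteq[N]$ associate the point set $X_{\mathcal{S}}=\{q_1,\dots,q_N\}\cup\{p_1^{\mathcal{S}},\dots,p_n^{\mathcal{S}}\}$ with $p_i^{\mathcal{S}}=M^{-1/2}\sum_{j\in S_i}q_j$. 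A direct computation of $\|p_i^{\mathcal{S}}-q_j\|_2^2$ shows it takes one of two values whose ratio is $1\pm\Theta(M^{-1/2})=1\pm\Theta(\varepsilon)$, according to whether $j\in S_i$ (using that the $q_j$ are essentially orthonormal; when they are random this requires a concentration estimate that holds uniformly over the whole family). Since $\operatorname{diam} X_{\mathcal{S}} = \Theta(1)$ and there are $\binom{N}{M}^n$ tuples, we get $\log|\mathcal{F}| = \Theta\!\big(n\,M\log(N/M)\big) = \Theta\!\big(n\,\varepsilon^{-2}\log(\varepsilon^2 n)\big)$ when $N=\Theta(n)$.

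Now suppose each $X_{\mathcal{S}}$ admits a map $f_{\mathcal{S}}$ satisfying \eqref{eq: bilip}. Translate so that $f_{\mathcal{S}}(q_1)=0$; then all $n+N$ images lie in a $\|\cdot\|$-ball of radius $\Delta=\Theta(1)$. Pick a $\gamma$-net $\mathcal{G}$ of that ball in the norm $\|\cdot\|$ with $\gamma$ a sufficiently small constant times the gap $\Theta(\varepsilon)$; a standard packing argument gives $|\mathcal{G}|\le(1+2\Delta/\gamma)^k=(C/\varepsilon)^k$, and crucially this is a ratio of volumes of dilates of the \emph{same} convex body, hence the estimate holds verbatim for every norm. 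Snapping each image to its nearest net point perturbs every pairwise $\|\cdot\|$-distance by at most $2\gamma$, which stays below the $\Theta(\varepsilon)$ gap, so from the snapped configuration --- a function from the $n+N$ point labels into $\mathcal{G}$ --- one reads off, for each $i$ and $j$, whether $j\in S_i$, and thus recovers $\mathcal{S}$. Hence $\mathcal{S}\mapsto(\text{snapped embedding})$ is injective on $\mathcal{F}$, giving $(C/\varepsilon)^{k(n+N)}\ge|\mathcal{F}|$; taking logarithms and using $n+N=\Theta(n)$ yields
\[
k \;=\; \Omega\!\left(\frac{\log|\mathcal{F}|}{n\log(1/\varepsilon)}\right) \;=\; \Omega\!\left(\frac{\log(\varepsilon^2 n)}{\varepsilon^2\log(1/\varepsilon)}\right),
\]
after a harmless rescaling so that $|X_{\mathcal{S}}|=n$.

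The main obstacle is the joint design of $M$, $N$, and the anchor geometry: one must ensure simultaneously that the family is genuinely large ($N$ as close to $n$ as possible), that the ``$j\in S_i$ versus $j\notin S_i$'' gap is of order exactly $\varepsilon$ --- large enough to survive the multiplicative $(1\pm\varepsilon)$ distortion, the additive net error $2\gamma$, and the fluctuation from the anchors not being perfectly orthonormal, all with constants lined up --- and that the anchors actually fit in $\mathbb{R}^d$ with the required uniform concentration over an exponentially large family, which is precisely where the hypothesis $\varepsilon > \sqrt{\log n}/\sqrt{d}$ gets consumed (and where the regime $n\gg d$ forces one to replace $n$ by an appropriate smaller quantity in intermediate steps). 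By contrast, the passage to a general target norm is essentially free: it is confined to the net-counting step, and the price paid relative to the sharper linear-algebraic count available in the Euclidean case is exactly the extra $\log(1/\varepsilon)$ factor in the denominator.
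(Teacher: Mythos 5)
Your proposal is correct and follows essentially the same route as the paper: the Larsen--Nelson-style family of anchors plus normalized sums over subsets of size $\Theta(\varepsilon^{-2})$, the $\Theta(\varepsilon)$ in/out distance gap, recovery after rounding to an $\varepsilon$-net of a norm ball (whose $(C/\varepsilon)^k$ bound is the only place the target norm enters), and the same counting comparison yielding $k=\Omega\big(\varepsilon^{-2}\log(\varepsilon^2 n)/\log(1/\varepsilon)\big)$. The only substantive divergence is the regime $d \ll n$, which you handle by random near-orthonormal anchors in $\mathbb{R}^d$ with a uniform concentration estimate (consuming $\varepsilon > \sqrt{\log n/d}$ there), whereas the paper builds the construction in dimension $(n-1)/2$ and composes with a Johnson--Lindenstrauss map into $\mathbb{R}^d$; both reductions use the hypothesis in the same way and work up to constants.
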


	\subsection{Related Work}
	
	It is natural to ask whether JL-type embeddings extend to general $\ell_p$ norms. However, linear embeddings preserving pairwise $\ell_p$ distances up to $1 \pm \varepsilon$ are impossible for $p \neq 2$. Charikar and Sahai \cite{charikar2002dimension} (for $p=1$) and Lee, Mendel, and Naor \cite{james2005metric} (for general $p$) showed that any such linear map must incur distortion at least $\left( \frac{n}{k} \right)^{|1/p - 1/2|}$, implying that for $\ell_1$, the embedding dimension must be $k = \Omega(\varepsilon^{-2} n)$. This is much higher than the optimal $\mathcal{O}(\varepsilon^{-2} \log n)$ dimension guaranteed by the JL lemma for the Euclidean case.

	A more promising direction is to construct embeddings that map $\ell_2$ distances in the domain space to approximate $\ell_p$ distances in the target space. However, this setting introduces challenges not present in the $\ell_2$ case. Unlike the $\ell_2$ norm, where $\mathbb{E} \|\Psi x\|_2^2 = k$ uniformly for all unit vectors ${  x}$ when $\Psi$ has independent zero-mean, unit-variance entries, the expectation $\mathbb{E} \|\Psi x\|_p$ generally depends on the specific vector ${  x}$. Since embedding quality relies on concentration around the mean, this variability makes achieving low-distortion embeddings in $\ell_p$ more difficult.
	
	An important exception occurs when $\Psi$ has independent Gaussian entries. In this case, $\Psi x$ is a Gaussian vector, and the $p$-th moment satisfies $\mathbb{E} \|\Psi x\|_p^p = \beta_p^p \cdot k$ for all unit vectors $x$. While this makes Gaussian matrices theoretically ideal for $\ell_p$ embeddings, their unstructured nature leads to high computational costs, motivating the search for faster, structured alternatives.
    
	A significant breakthrough in this direction was the \textit{Fast Johnson-Lindenstrauss Transform (FJLT)}, introduced by Ailon and Chazelle in \cite{ailon2009the}. The FJLT can embed all \(\ell_2\)-distances between points in a set $X \subseteq \mathbb{R}^d$ of size $n$ into \(\ell_1\) with constant probability while simultaneously supporting $\mathcal{O}\left( d \log d + \min\{d \varepsilon^{-2} \log n, \varepsilon^{-3} \log^2 n\} \right)$-time matrix vector multiplications, thereby reducing the computational complexity of its predecessors \cite{Kushilevitz2000efficient}. This efficiency has made the FJLT an important tool in applications such as approximate nearest neighbor search. Further progress was made by Matou\v sek \cite{matousek2008on}, who showed that the Gaussian entries in the FJLT matrices can be replaced with \(\pm 1\) entries and still achieve $\ell_1$ embeddings, albeit with a slightly worse dependence on %the sparsity 
    parameters. 
	
	A major advancement was recently achieved in \cite{Dirksen2024fast} by Dirksen, Mendelson, and Stollenwerk, who constructed a fast embedding from $\ell_2$ into $\ell_1$ based on double circulant matrices. Their method significantly improves upon all previous constructions by simultaneously achieving optimal embedding dimension $k = \mathcal{O}(\varepsilon^{-2} \log |X|)$ and fast computation time $\mathcal{O}(d \log d)$. Moreover, the embedding is valid for sets $X \subseteq \mathbb{R}^d$ of size up to $|X| =\mathcal{O}(\exp(c \varepsilon^2 d / \log^6 d))$, which covers nearly all practical cases, since the maximum possible cardinality for distortion-preserving embeddings in $\mathbb{R}^d$ is only exponential in $d$. This result was obtained as a by-product of their main goal: constructing binary embeddings into the Hamming cube with optimal dimension and near-linear runtime. In their analysis they establish surprisingly strong concentration inequalities for double circulant matrices, despite the high dependence among their columns.
	
	\subsection{Our Contribution}
	
	We present a simple and fast linear embedding for finite sets that preserves Euclidean distances measured via the $\ell_p$ norm in the target space for any $p \in [1,2]$. Moreover, we provide explicit bounds and constants that quantify the relationship between the embedding dimension $k$, the distortion $\varepsilon$, and the probability of success. 
	
	The key technical ingredient in the proof of Theorem \ref{theo: main} is the following concentration result, which guarantees pointwise preservation in the $\ell_p$ norm with high probability.

    \begin{theorem}\label{theo: main 2}
		Let \({  x} \in \mathbb{R}^d\) with \(\| {  x}\|_2 = 1\) and \(\varepsilon \in (0, 1)\). Assume \(k \geq \varepsilon^{-2}\max\{4941, 50C_0\}\) and \(k \leq d^{1/4}\). Then, the embedding \(\Psi \colon \mathbb{R}^d \to \mathbb{R}^k\) defined in \eqref{eq:  def embedding} satisfies
		\[
		\prob{\left| \|\Psi {  x}\|_p - 1 \right| > \varepsilon} \leq 6 \exp\left( -\frac{k\varepsilon^2}{216} \right).
		\]
		Here, $C_0$ is the absolute constant from Lemma \ref{lem: berry-esseen}.
	\end{theorem}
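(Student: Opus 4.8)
The plan is to isolate the two independent sources of randomness in $\Psi$: the ``flattening'' randomness in $D_1,D_2,D_3$ and the ``sketching'' randomness in the $4$-wise independent matrix $A$. Write $z := D_1HD_2HD_3\,x$ (with $H$ normalized so that it is orthogonal), so that $\|z\|_2=\|x\|_2=1$ and, denoting by $a_1,\dots,a_k$ the rows of $A$,
\[
\|\Psi x\|_p^p \;=\; k^{-1}\beta_p^{-p}\sum_{i=1}^k |\langle a_i, z\rangle|^p \;=\; \beta_p^{-p}\,\bar S, \qquad \bar S := \tfrac1k\sum_{i=1}^k |\langle a_i,z\rangle|^p .
\]
Since $a^s\le a$ for $a\ge 1$ and $a^s\ge a$ for $0<a\le 1$ whenever $0<s\le 1$, one checks that $\bigl|(1+u)^{1/p}-1\bigr|\le\varepsilon$ as soon as $|u|\le\varepsilon\le 1$ and $p\ge 1$. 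Applying this with $1+u=\bar S/\beta_p^p$, it suffices to show that, with the claimed probability, the empirical $p$-th moment satisfies $|\bar S-\beta_p^p|\le\varepsilon\beta_p^p$, uniformly over $p\in[1,2]$.

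First I would carry out the flattening step. Conditioning on $D_3$ (hence on $HD_3x$), each coordinate of $z$ is, up to a sign, a Rademacher sum in the entries of $D_2$ whose coefficient vector has $\ell_2$-norm $d^{-1/2}$; Hoeffding's inequality and a union bound over the $d$ coordinates then give $\prob{\|z\|_\infty>\tau}\le 2d\,e^{-\tau^2 d/2}$ for every $\tau>0$ (the extra Walsh--Hadamard layer in the Ailon--Liberty-style construction only makes this event rarer). I will fix a threshold $\tau=\Theta(\varepsilon^2)$, dictated by the next step, and set $\mathcal G:=\{\|z\|_\infty\le\tau\}$. The point of the hypothesis $k\le d^{1/4}$, together with $k\ge \varepsilon^{-2}\max\{4941,50C_0\}$ (which keeps $\varepsilon$ from being too small), is precisely that it forces $\tau^2 d \gtrsim k\varepsilon^2+\log d$, so that $\prob{\stcomp{\mathcal G}}\le 6\exp(-k\varepsilon^2/216)$; this is where essentially the entire failure probability comes from.

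The heart of the argument, and the step I expect to be the main obstacle, is a \emph{deterministic} comparison on $\mathcal G$: for every fixed unit vector $z$ with $\|z\|_\infty\le\tau$ one must show $|\bar S-\beta_p^p|\le\varepsilon\beta_p^p$, uniformly in $p\in[1,2]$. Here I would use the $4$-wise independence of $A$ in two ways. (i) Since a uniformly chosen row of $A$ is a $4$-wise independent Rademacher vector, the empirical distribution $\widehat F_z$ of $\langle a_1,z\rangle,\dots,\langle a_k,z\rangle$ has the same first four moments as the Rademacher sum $\sum_j \epsilon_j z_j$; in particular $\int t^2\,d\widehat F_z=1$ and $\int t^4\,d\widehat F_z = 3-2\|z\|_4^4\le 3$, which yields the tail bound $\widehat F_z\{|t|>T\}\le 3T^{-4}$. (ii) Lemma~\ref{lem: berry-esseen} (a Berry--Esseen estimate for $4$-wise independent sums) bounds $\eta:=\sup_t|\widehat F_z(t)-\Phi(t)|$ by $C_0$ times a flatness functional of $z$ that $\|z\|_\infty\le\tau$ makes of order $\sqrt{\tau}$. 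One then writes $\bar S-\beta_p^p=\int |t|^p\,d(\widehat F_z-\Phi)(t)$, integrates by parts (the boundary terms vanish since $|t|^p\bigl(\widehat F_z-\Phi\bigr)(t)\to 0$ at $\pm\infty$ by the tail bounds), and splits the resulting integral $\int |t|^{p-1}|\widehat F_z(t)-\Phi(t)|\,dt$ at a free level $T$: on $|t|\le T$ it is at most $2T^p\eta$, while on $|t|>T$ the tail bound from (i) and the Gaussian tail contribute only $O(T^{p-4})$. Optimizing $T\asymp\eta^{-1/4}$ gives $|\bar S-\beta_p^p|\lesssim \eta^{(4-p)/4}\lesssim (C_0\tau)^{1/2}$, which is $\le\varepsilon\beta_p^p$ for the choice of $\tau$ above; tracking every constant through Hoeffding, Lemma~\ref{lem: berry-esseen}, the integration by parts, and the Taylor estimate for $t\mapsto t^{1/p}$ is exactly what produces the explicit numbers $4941$, $50$, and $216$.

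Assembling the pieces, on $\mathcal G$ we have $\bigl|\|\Psi x\|_p^p-1\bigr|=\bigl|\bar S/\beta_p^p-1\bigr|\le\varepsilon$, hence $\bigl|\|\Psi x\|_p-1\bigr|\le\varepsilon$ holds deterministically there, so $\prob{\bigl|\|\Psi x\|_p-1\bigr|>\varepsilon}\le\prob{\stcomp{\mathcal G}}\le 6\exp(-k\varepsilon^2/216)$, as claimed. The only genuinely delicate point is the third step: the functional $t\mapsto|t|^p$ is not a polynomial, so matching four moments does not by itself control $\bar S$, and it must be combined carefully with the CDF estimate and the fourth-moment tail bound — uniformly over $p\in[1,2]$, and with constants sharp enough that the resulting constraint on $\tau$ remains compatible with the Hoeffding bound under the assumption $k\le d^{1/4}$.
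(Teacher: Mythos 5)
There is a genuine gap, and it is located exactly where you flag the ``main obstacle'': the deterministic comparison on $\mathcal G$ is false. By absorbing $D_1$ into $z:=D_1HD_2HD_3x$ and conditioning only on flatness of $z$, you discard the one source of randomness that the argument actually needs, and what remains cannot work: $A$ is a \emph{fixed, deterministic} $4$-wise independent matrix, so for a fixed flat $z$ the numbers $\langle a_i,z\rangle$ have no reason to have Gaussian-like empirical statistics. Concretely, take $z=d^{-1/2}a_1$, a perfectly flat unit vector with $\|z\|_\infty=d^{-1/2}\le\tau$; then $\langle a_1,z\rangle=\sqrt d$, so $\bar S\ge d^{p/2}/k\gg\beta_p^p$, and the claimed bound $|\bar S-\beta_p^p|\le\varepsilon\beta_p^p$ fails badly even though $z$ passes every flatness filter. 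Your moment-matching claim (i) also misreads the definition of $4$-wise independence: it is a statement about the \emph{columns} of $A$ (each sign pattern on any four rows appears exactly $d/16$ times), which the paper uses only to get the deterministic operator-norm bound $\|A^T\|_{2\to4}\le(3d)^{1/4}$; it does not make the empirical distribution over the $k$ rows match Rademacher moments. Indeed $\int t^2\,d\widehat F_z=k^{-1}\|Az\|_2^2$ cannot equal $1$ for all $z$ (since $A^TA$ has rank at most $k<d$), and controlling exactly this quantity is what the flatness condition plus $\|M\|_{2\to2}\le\|x\|_4\|A^T\|_{2\to4}$ is for. Likewise, Lemma~\ref{lem: berry-esseen} bounds the CDF of a \emph{sum of independent random variables}; it says nothing about the empirical CDF of $k$ deterministic numbers, so there is no mechanism in your step (ii) to make $\eta=\sup_t|\widehat F_z(t)-\Phi(t)|$ small once $D_1$ has been frozen inside $z$.

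The paper's proof keeps $D_1$ as fresh randomness after conditioning: it conditions on $\|HD_2HD_3x\|_4\lesssim d^{-1/4}$ (two applications of Lemma~\ref{lem: talagrand hadamard}, which is where $k\le d^{1/4}$ enters), then treats $Z=\|k^{-1/p}\beta_p^{-1}AD_1z\|_p$ as a function of the Rademacher vector in $D_1$: Berry--Esseen over that randomness (Lemma~\ref{lem: expectation}) pins $\|Z\|_p$ near $1$, Talagrand's inequality (Corollary~\ref{cor: concentration simple}, with the Lipschitz constant controlled via $\|A^T\|_{2\to4}$ and $\|z\|_4$) gives concentration of $Z$ about $\mathbb{E}Z$ and, via sub-gaussian moment bounds, closeness of $\mathbb{E}Z$ to $\|Z\|_p$; the triangle inequality then yields the $\varepsilon$-bound with the stated probability. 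To repair your argument you would have to restore the $D_1$-randomness at the comparison step rather than trying to prove a deterministic statement about flat vectors against a fixed $A$; your reduction $|(1+u)^{1/p}-1|\le|u|$ and the $\ell_\infty$-flattening step are fine but do not address this central issue.
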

	Our main result Theorem \ref{theo: main} follows from Theorem \ref{theo: main 2} via a standard union bound argument. Finally, Theorem \ref{theo: lower bound} shows that the dimension reduction provided by Theorem \ref{theo: main} is optimal up to logarithmic factors.

    \subsection{Organization of the Paper}
	
	The remainder of this paper is organized as follows. In Section \ref{sec 2}, we introduce the necessary preliminaries, definitions, and notation (including key concepts related to \(\ell_p\) norms, operator norms, and matrices) used throughout the paper.  Next, Section~\ref{sec 3} develops the key ingredients used in the proof of our main results. We begin by analyzing concentration of measure followed by the expected value of the embedding.  In Section \ref{sec 4}, we combine these results to prove Theorem \ref{theo: main 2}, and establish Theorem \ref{theo: main} as a corollary.  Finally, we prove Theorem \ref{theo: lower bound} in Section \ref{sec 5}.
    
\section{Preliminaries}\label{sec 2}

For convenience, we assume that both $d$ (the ambient dimension of the input space) and $k$ (the target dimension of the embedding) are powers of $2$. This assumption simplifies the arguments, particularly when working with structured matrices like $4$-wise independent matrices or the Walsh-Hadamard matrix defined below. This can be achieved with minimal impact on dimensionality reduction by increasing $d$ and $k$ to the nearest power of $2$ and padding the additional entries with zeros.

For any \(p \geq 1\), the \(\ell_p\) norm of a vector \({  x} \in \mathbb{R}^d\) is defined by \(\|{  x}\|_p = \left( \sum_{j=1}^d |x_j|^p \right)^{1/p}\). For a random variable $X$, we denote its $L_p$ norm by $\|X\|_p = (\mathbb{E}|X|^p)^{1/p}$.
The standard inner product between two vectors \({  x}, {  y} \in \mathbb{R}^d\) is denoted \(\langle {  x}, {  y} \rangle\), and is defined as $\langle {  x}, {  y} \rangle = \sum_{j=1}^d x_j y_j$.
A matrix is called \textit{signed} when all its entries are either \(+1\) or \(-1\). The \(i\)-th row of a matrix \(A\) is denoted \(A_{(i)}\), while the \(j\)-th column is denoted \(A^{(j)}\). For a matrix \(A \in \mathbb{R}^{k \times d}\), the \(\ell_p \to \ell_q\) \textit{operator norm} is defined as
\[
\|A\|_{p \to q} = \sup_{\|{  x}\|_p = 1} \|A {  x}\|_q.
\]

A function \(f \colon \mathbb{R}^d \to \mathbb{R}\) is called \textit{Lipschitz} if there exists a constant \(K > 0\) such that $|f({  x}) - f({  y})| \leq K \| {  x} - {  y}\|_2$ for all \({  x}, {  y} \in \mathbb{R}^d\).  The smallest such constant \(K\) is called the \textit{Lipschitz constant} of \(f\) and is denoted by \(\|f\|_L\).

A \textit{Rademacher diagonal matrix} is a diagonal matrix $D \in \mathbb{R}^{d \times d}$ whose diagonal entries are independent random variables $\xi_j$ taking values $+1$ or $-1$ with probability $1/2$. We denote the vector of diagonal entries by $\xi = (\xi_1, \ldots, \xi_d)$.

A matrix $A \in \mathbb{R}^{k \times d}$ is called \textit{4-wise independent} if for any $1 \leq i_1 < i_2 < i_3 < i_4 \leq k$ and any $(b_1, b_2, b_3, b_4) \in \{+1, -1\}^4$, the number of columns $A^{(j)}$ for which
\[
(A^{(j)}_{i_1}, A^{(j)}_{i_2}, A^{(j)}_{i_3}, A^{(j)}_{i_4}) = \frac{1}{\sqrt{k}} (b_1, b_2, b_3, b_4)
\]
is exactly $d / 2^4$. 

The \textit{Walsh-Hadamard matrix} $H_d \in \mathbb{R}^{d \times d}$ is a structured orthogonal matrix defined recursively for $d = 2^m$ as
\[
H_d = \frac{1}{\sqrt{2}} \begin{bmatrix}
	H_{d/2} & H_{d/2} \\
	H_{d/2} & -H_{d/2}
\end{bmatrix},
\]
with $H_1 = [1]$. It satisfies $H_d^T H_d = I_d$, where $I_d$ is the identity matrix. This makes $H_d$ an isometry, meaning it preserves the $\ell_2$ norm of any vector $x \in \mathbb{R}^d$. From now on, we will use $H$ to denote $H_d$ and drop the index $d$ for simplicity.

	\section{Technical Ingredients}\label{sec 3}
	
	This section develops the core technical tools required for our main results. Let \( k, d \in \mathbb{N} \) with \( k \leq d \) and $p \in [1,2]$. We consider random matrices \( \Phi : \mathbb{R}^d \to \mathbb{R}^k \) of the form $\Phi = k^{-1/p}AD$, where
	\begin{itemize}
		\item \( A \in \{+1, -1\}^{k \times d} \) is a \textit{signed matrix} (i.e., all entries are \( \pm 1 \)),
		\item $D$ is a \textit{Rademacher diagonal matrix}, that is, its diagonal entries $ \xi_1, \dots, \xi_d $ are independent random variables taking values \( \pm 1 \) with probability \( 1/2 \).
	\end{itemize}
	
	\subsection{Concentration of measure}

	We begin by providing conditions to guarantee that $\|\Phi x\|_p$ is highly concentrated around its expectation. We use the following powerful concentration inequality due to Talagrand, which corresponds to (1.6) in \cite{talagrand1991probability}.
	
	\begin{theorem}[Talagrand's concentration inequality]\label{theo: talagrand}
		Let $f\colon \mathbb{R}^d \longrightarrow \mathbb{R}$ be a Lipschitz convex map with Lipschitz constant $\|f\|_{L}$, and let $\xi \in \mathbb{R}^d$ be a Rademacher vector. Then
		\begin{equation}\label{eq: talagrand}
		\prob{ |f(\xi) - \mathbb{E}f(\xi)| > t} \leq 2 \exp\left( -\frac{t^2}{2 \|f\|_{L}^2} \right) \quad \forall \, t>0.
		\end{equation}
	\end{theorem}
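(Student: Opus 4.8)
This inequality is quoted from the literature (it is $(1.6)$ in \cite{talagrand1991probability}), so one option is simply to cite it; for a self-contained argument I would reconstruct it from Talagrand's \emph{convex distance inequality} on the discrete cube. It is worth noting first why a cheaper route fails: the bounded-differences (McDiarmid) inequality only controls coordinatewise increments, and a single sign flip can move an $L$-Lipschitz $f$ by $2L$, giving merely the dimension-dependent estimate $\mathbb{P}(|f(\xi)-\mathbb{E}f(\xi)|>t)\le 2\exp(-t^2/(8L^2 d))$. The role of convexity is exactly to upgrade this to a dimension-free bound.

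\textbf{Step 1 (the crux): the convex distance inequality.} For nonempty $A\subseteq\{-1,1\}^d$ and $x\in\{-1,1\}^d$, set $d_c(x,A)=\sup_{\|\alpha\|_2\le1}\inf_{\mu\in\mathcal{P}(A)}\sum_{j=1}^d|\alpha_j|\,\mu\big(\{y:y_j\ne x_j\}\big)$, where $\mathcal{P}(A)$ denotes probability measures supported on $A$. The central claim is $\mathbb{E}_\xi\!\left[\exp\!\big(\tfrac14 d_c(\xi,A)^2\big)\right]\le 1/\mathbb{P}(\xi\in A)$, which I would prove by induction on $d$: the one-dimensional base case is an explicit computation, and the inductive step splits $\{-1,1\}^d$ along the last coordinate, applies the $(d-1)$-dimensional estimate on each slice, and then interpolates between the two resulting near-optimal strategies via a one-parameter convexity optimization. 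This induction is the combinatorial heart of the proof and the step I expect to be the main obstacle. Markov's inequality then gives $\mathbb{P}(\xi\in A)\,\mathbb{P}\big(d_c(\xi,A)\ge u\big)\le e^{-u^2/4}$ for all $u>0$.

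\textbf{Step 2: from convex distance to oscillation, then to concentration.} A Sion minimax argument (using that $\mathcal{P}(A)$ is a compact simplex and $\sup_{\|\alpha\|_2\le1}\sum_j|\alpha_j|h_j=\|h\|_2$ for $h\ge0$) rewrites $d_c(x,A)=\min_{\mu\in\mathcal{P}(A)}\big(\sum_j\mu(y_j\ne x_j)^2\big)^{1/2}$. Fix $a\in\mathbb{R}$, put $A=\{x\in\{-1,1\}^d:f(x)\le a\}$, and let $x$ satisfy $d_c(x,A)\le u$ with minimizing $\mu$; set $\bar y=\int y\,d\mu\in[-1,1]^d$. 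Jensen and convexity give $f(\bar y)\le\int f\,d\mu\le a$, while $|x_j-\bar y_j|=\big|\int(x_j-y_j)\,d\mu\big|\le 2\mu(y_j\ne x_j)$ forces $\|x-\bar y\|_2\le 2u$; hence $f(x)\le f(\bar y)+\|f\|_L\|x-\bar y\|_2\le a+2\|f\|_L u$, so $\{f>a+2\|f\|_L u\}\subseteq\{d_c(\cdot,A)>u\}$. Choosing $a$ to be a median $M$ of $f(\xi)$ (so $\mathbb{P}(A)\ge1/2$) and feeding this into Step 1 yields the upper tail $\mathbb{P}(f(\xi)>M+s)\le 2\exp(-c\,s^2/\|f\|_L^2)$; the lower tail follows symmetrically by taking $A=\{f\le M-s\}$ and using $\mathbb{P}(f(\xi)\ge M)\ge1/2$ together with the same Jensen estimate on that set. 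Since concentration already forces $|\mathbb{E}f(\xi)-M|=O(\|f\|_L)$, the median may be replaced by the mean after adjusting constants, and with the appropriate normalization of $d_c$ (equivalently, the sharp form of Step 1) the constant in the exponent sharpens to the stated $2\|f\|_L^2$. All of this constant-tracking is routine; only the induction of Step 1 is genuinely delicate.
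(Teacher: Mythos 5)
The paper does not prove this statement at all: it is imported verbatim as (1.6) of the cited source, so citation is the intended ``proof,'' and your first option (just cite it) matches the paper exactly. Your reconstruction via the convex distance inequality is the standard textbook route and is essentially sound: the definition you give of $d_c$ agrees with Talagrand's (the inner infimum over $\mathcal{P}(A)$ is attained at point masses), the induction-on-$d$ proof of $\mathbb{E}\exp(d_c(\xi,A)^2/4)\le 1/\mathbb{P}(A)$ is the genuinely delicate step as you say, and the Sion/Jensen argument in Step 2 (using convexity to get $f(\bar y)\le a$ and the bound $|x_j-\bar y_j|\le 2\mu(y_j\ne x_j)$ on $\{-1,1\}^d$) is the correct way to pass from the set-level inequality to tails of convex Lipschitz functions. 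The one claim I would push back on is the last sentence: the assertion that ``routine constant-tracking'' upgrades your bound to the exact stated form $2\exp\bigl(-t^2/(2\|f\|_L^2)\bigr)$ around the \emph{mean} is overstated. Your route naturally yields concentration around a \emph{median} with exponent of order $t^2/(16\|f\|_L^2)$ (the factor $2$ from $|x_j-y_j|\le 2$ on the cube is unavoidable in this argument), and converting median to mean costs further in the constants and only helps for $t\gtrsim\|f\|_L$; obtaining the precise constants as stated requires either the sharper formulation in the cited source or a more careful argument than you indicate. Since the downstream results of the paper only use this inequality up to absolute constants (which would merely change the explicit numbers such as $216$ and $4941$), this is a presentational inaccuracy rather than a fatal gap, but as written the final sentence claims more than your sketch delivers.
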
 
	To apply Theorem \ref{theo: talagrand}, we rewrite $\|\Phi x\|_p$ as a function of a Rademacher vector. Specifically, consider a $k \times d$ matrix $M$ defined column-wise as:
	\[
	M^{(j)} = A^{(j)} x_j \quad \text{for } j = 1, \dots, d.
	\]
	Then, observe that
	\[
	\|AD x\|^p_p = \sum_{i=1}^k \left| \sum_{j=1}^d A_{i,j} x_j \xi_j \right|^p = \sum_{i=1}^k |\langle M_{(i)}, \xi \rangle|^p = \|M \xi \|_p^p.
	\]
	In our case, the function $f( \xi) = \|k^{-1/p}M \xi\|_p$ is convex and Lipschitz with \begin{equation}\label{eq: lipschitz norm}
		\|f\|_L\leq \frac{1}{\sqrt{k}} \|M\|_{2\to 2}.
	\end{equation} 
	To see this, first note that the triangle inequality implies $\|f\|_L \leq k^{-1/p} \|M\|_{2 \to p}$. Moreover, for any $x \in \mathbb{S}^{d-1}$ H\"{o}lder's inequality yields
	\[\|Mx\|_p^p = \sum_{i=1}^k |\langle M_{(i)},x\rangle|^p \leq k^{1-p/2} \left (\sum_{i=1}^k |\langle M_{(i)},x\rangle|^2 \right )^{p/2} = k^{1-p/2} \|Mx\|_2^p,\]
	whence taking supremum over $x\in \mathbb{S}^{d-1}$ shows that $\|M\|_{2 \to p} \leq k^{1/p-1/2} \|M\|_{2 \to 2}$, proving the claim.
	
	The operator norm $\|M\|_{2 \to 2}$ has been studied extensively in the context of JL-type embeddings. Ailon and Liberty \cite{Ailon2008fast} showed that this quantity can be controlled when $A$ is a 4-wise independent matrix and the coordinates of $x$ are sufficiently well-spread (i.e., $\|x\|_4 = \mathcal{O}(d^{-1/4})$).
	\begin{lemma}[(5.2) and Lemma 5.1 in \cite{Ailon2008fast}]\label{lem: Ailon sigma}
		For the matrix $M$ defined by $M^{(j)} = A^{(j)} x_j$, we have
		\[
		\|M\|_{2 \to 2} \leq \|{  x}\|_4 \|A^T\|_{2 \to 4}.
		\]
		Additionally, if $A$ is a $k\times d$ $4$-wise independent matrix, then $\|A^T\|_{2 \to 4} \leq (3d)^{1/4}$.
	\end{lemma}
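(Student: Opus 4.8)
The plan is to establish the two claims separately. For the first inequality, I would start from the column-wise definition $M^{(j)} = A^{(j)} x_j$, which means that for any vector $u \in \mathbb{R}^k$, the entries of $M^T u$ are $(M^T u)_j = x_j \langle A^{(j)}, u \rangle = x_j (A^T u)_j$; in other words, $M^T u$ is the entrywise (Hadamard) product of $x$ with $A^T u$. Consequently, to bound $\|M\|_{2 \to 2} = \|M^T\|_{2 \to 2}$, I would take a unit vector $u \in \mathbb{S}^{k-1}$ and estimate $\|M^T u\|_2 = \|\,x \odot (A^T u)\,\|_2$ by Hölder's inequality with the conjugate exponents $4$ and $4/3$ applied to the sum $\sum_j x_j^2 (A^T u)_j^2$: this gives $\|x \odot (A^T u)\|_2^2 \leq \|x\|_4^2 \, \|A^T u\|_4^2$, hence $\|M^T u\|_2 \leq \|x\|_4 \|A^T u\|_4 \leq \|x\|_4 \|A^T\|_{2 \to 4}$. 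Taking the supremum over $u$ yields the first bound.

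For the second claim, I would bound $\|A^T\|_{2 \to 4} = \sup_{\|u\|_2 = 1} \|A^T u\|_4$ by computing the fourth moment. Writing $\|A^T u\|_4^4 = \sum_{j=1}^d (A^T u)_j^4 = \sum_{j=1}^d \big(\sum_{i=1}^k A_{i,j} u_i\big)^4$, I would expand the fourth power and sum over $j$. Because $A$ is $4$-wise independent, for any fixed quadruple of row indices $i_1, i_2, i_3, i_4$ the vector $\frac{1}{\sqrt k}(A_{i_1,j}, \dots, A_{i_4,j})$ runs uniformly over all $2^4$ sign patterns as $j$ ranges over the columns (each pattern occurring exactly $d/2^4$ times), so $\sum_{j} A_{i_1,j} A_{i_2,j} A_{i_3,j} A_{i_4,j}$ vanishes unless the indices pair up or all coincide. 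The surviving terms are of two types: the "all equal" terms $\sum_j A_{i,j}^4 = d$, contributing $d \sum_i u_i^4$, and the "two pairs" terms (with multiplicity $\binom{4}{2}/2 \cdot 2 = 3$ from the multinomial coefficient) contributing $3 d \sum_{i \neq i'} u_i^2 u_{i'}^2$. Hence $\|A^T u\|_4^4 \leq 3 d \big(\sum_i u_i^2\big)^2 = 3d$ using $\|u\|_2 = 1$, which gives $\|A^T\|_{2 \to 4} \leq (3d)^{1/4}$.

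The main obstacle — really the only delicate point — is the bookkeeping in the fourth-moment expansion: one must correctly identify which monomials $A_{i_1,j} A_{i_2,j} A_{i_3,j} A_{i_4,j}$ survive the sum over $j$ under the $4$-wise independence hypothesis, and get the multinomial multiplicities right so that the cross terms assemble into $3d\big(\sum_i u_i^2\big)^2$ rather than something larger. Since $4$-wise independence is exactly the hypothesis that makes every such degree-$\le 4$ sign-monomial behave as if the entries were genuinely independent Rademacher variables (after the $1/\sqrt k$ scaling), this is a routine but careful computation. Everything else — the two Hölder applications and the final supremum — is immediate. I would also note that the first inequality needs no randomness at all; it is a deterministic consequence of the structure of $M$, and only the bound on $\|A^T\|_{2 \to 4}$ uses the $4$-wise independence of $A$.
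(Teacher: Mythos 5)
Your proposal is correct in substance, and it is worth noting that the paper itself gives no proof of this lemma at all: it imports the two bounds directly from Ailon and Liberty's (5.2) and Lemma 5.1, only adjusting the scaling. Your self-contained derivation is exactly the standard argument behind those citations: the first bound is the deterministic observation that $M = A\,\mathrm{diag}(x)$, so $\|M^T u\|_2 = \|x \odot (A^T u)\|_2 \le \|x\|_4 \|A^T u\|_4$, and the second is the fourth-moment computation in which $4$-wise independence forces the column averages of degree-$4$ sign monomials to match Rademacher expectations, giving $\|A^T u\|_4^4 = d\bigl(\sum_i u_i^4 + 3\sum_{i \ne i'} u_i^2 u_{i'}^2\bigr) \le 3d\|u\|_2^4$.

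Two minor slips should be repaired, neither of which affects the conclusion. First, the exponents in part one: to pass from $\sum_j x_j^2 (A^T u)_j^2$ to $\|x\|_4^2 \|A^T u\|_4^2$ you should invoke Cauchy--Schwarz on the squared sequences (equivalently, the generalized H\"older inequality $\|fg\|_2 \le \|f\|_4 \|g\|_4$); the pair $(4,4/3)$ applied to that sum would instead produce $\|x\|_8^2 \|A^T u\|_{8/3}^2$. Second, the multiplicity bookkeeping ``$\binom{4}{2}/2 \cdot 2 = 3$'' is garbled as written: each unordered pair $\{i,i'\}$ arises from $\binom{4}{2} = 6$ ordered $4$-tuples, which becomes the coefficient $3$ once you sum over ordered pairs $i \ne i'$, so your final expression is the right one. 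Finally, your computation (e.g.\ $\sum_j A_{i,j}^4 = d$) takes the entries of $A$ to be $\pm 1$, consistent with Section 3 and with the stated bound $(3d)^{1/4}$; under the $1/\sqrt{k}$-normalized definition of $4$-wise independence given in Section 2 the identical argument yields $(3d)^{1/4}/\sqrt{k}$ --- this is an inconsistency in the paper's conventions, not in your proof. The vanishing of the odd sign patterns for fewer than four distinct rows does require marginalizing the $4$-wise balance condition (available since $k \ge 4$ here), which is precisely the routine step you flagged.
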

	
	The original statements of these results have been modified to be consistent with the scaling used in this work. These estimates, together with Talagrand's inequality (\ref{eq: talagrand}) and (\ref{eq: lipschitz norm}), imply a sub-gaussian concentration bound for $\|ADx\|_p$ around its expected value when $x$ is sufficiently flat.
	
	\begin{corollary}
		\label{cor: concentration simple}
		Let $A$ be a 4-wise independent $k \times d$ matrix, $D$ a Rademacher diagonal matrix, and $\Phi = k^{-1/p} AD$. Then for any $x \in \mathbb{R}^d$ with $\|x\|_2=1$ and $t\geq 0$ we have
		\[
		\prob{ \left| \|\Phi x\|_p - \mathbb{E}\|\Phi x\|_p \right| > t } \leq 2 \exp\left( \frac{-kt^2}{2 \|{  x}\|_4^2 (3d)^{1/2}} \right).
		\]
	\end{corollary}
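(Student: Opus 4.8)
The plan is to simply feed the pieces already assembled in this subsection into Talagrand's inequality. Fix $x \in \mathbb{R}^d$ with $\|x\|_2 = 1$ and let $M$ be the $k\times d$ matrix with columns $M^{(j)} = A^{(j)}x_j$. As computed above, for every realization of the Rademacher vector $\xi$ (the diagonal of $D$) we have $\|ADx\|_p^p = \|M\xi\|_p^p$, and hence the function $f(\xi) := \|k^{-1/p}M\xi\|_p$ satisfies $f(\xi) = k^{-1/p}\|M\xi\|_p = k^{-1/p}\|ADx\|_p = \|\Phi x\|_p$. Thus the event in the statement is exactly $\{|f(\xi) - \mathbb{E}f(\xi)| > t\}$, and it suffices to bound $\prob{|f(\xi)-\mathbb{E}f(\xi)|>t}$.

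Next I would check the two hypotheses of Theorem \ref{theo: talagrand}. Convexity of $f$ is immediate: it is the composition of the (convex) $\ell_p$ norm with the linear map $\xi \mapsto k^{-1/p}M\xi$, so $f$ is convex; it is also clearly Lipschitz. For the Lipschitz constant, I would invoke \eqref{eq: lipschitz norm}, which gives $\|f\|_L \le k^{-1/2}\|M\|_{2\to 2}$, and then Lemma \ref{lem: Ailon sigma}, which yields $\|M\|_{2\to 2} \le \|x\|_4\,\|A^T\|_{2\to 4} \le \|x\|_4 (3d)^{1/4}$ since $A$ is $4$-wise independent. Note that this last bound is deterministic in $A$, so the only randomness relevant to Talagrand's inequality is that of $\xi$, exactly as required. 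Combining the two displays gives
\[
\|f\|_L^2 \;\le\; \frac{\|x\|_4^2\,(3d)^{1/2}}{k}.
\]

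Finally, I would plug this into \eqref{eq: talagrand}: for any $t \ge 0$,
\[
\prob{\bigl|\,\|\Phi x\|_p - \mathbb{E}\|\Phi x\|_p\,\bigr| > t} \;=\; \prob{|f(\xi)-\mathbb{E}f(\xi)|>t} \;\le\; 2\exp\!\left(-\frac{t^2}{2\|f\|_L^2}\right) \;\le\; 2\exp\!\left(\frac{-kt^2}{2\|x\|_4^2\,(3d)^{1/2}}\right),
\]
which is the claimed inequality. There is essentially no obstacle here: the corollary is a bookkeeping step that records the consequence of Talagrand's inequality combined with the operator-norm estimate of Ailon and Liberty. The only points worth stating carefully are (i) that $f$ is genuinely convex and Lipschitz so that Theorem \ref{theo: talagrand} applies, and (ii) that the bound on $\|A^T\|_{2\to 4}$ from Lemma \ref{lem: Ailon sigma} holds for the fixed $4$-wise independent matrix $A$, so that conditioning on $A$ and applying Talagrand's inequality in the variable $\xi$ alone is legitimate.
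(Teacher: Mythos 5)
Your proposal is correct and follows exactly the route the paper intends: the corollary is stated as an immediate consequence of Talagrand's inequality (Theorem \ref{theo: talagrand}) applied to the convex Lipschitz function $f(\xi)=\|k^{-1/p}M\xi\|_p$, with the Lipschitz constant controlled via \eqref{eq: lipschitz norm} and Lemma \ref{lem: Ailon sigma}, which is precisely your chain of estimates. Your added remarks on convexity and on $A$ being fixed (so only $\xi$ is random) are accurate bookkeeping and consistent with the paper.
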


    A common technique in dimensionality reduction is to precondition the input vector with a randomized isometry to ensure that its coordinates are well-spread or, in our particular case, to reduce its $\ell_4$ norm. Following an argument in \cite{Ailon2008fast}, we achieve this by applying a Hadamard transform composed with a diagonal Rademacher matrix.
	
	\begin{lemma}\label{lem: talagrand hadamard}
		Let ${  x} \in \mathbb{R}^d$ with $\|{ x}\|_2 = 1$, and let $D$ be a Rademacher diagonal matrix. Then, for any $t > 0$,
		\[
		\prob{\|HD {  x}\|_4 > (3^{1/4} + t)d^{-1/4} } \leq 2 \exp\left( -\frac{t^2}{2 \|{  x}\|_4^2} \right).
		\]
	\end{lemma}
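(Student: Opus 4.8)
The plan is to recognize $f(\xi) := \|HDx\|_4$ as a convex, globally Lipschitz function of the Rademacher vector $\xi = (\xi_1,\dots,\xi_d)$ and to apply Talagrand's inequality (Theorem~\ref{theo: talagrand}), exactly as in Corollary~\ref{cor: concentration simple}. Writing $D = \operatorname{diag}(\xi)$, we have $HDx = H\operatorname{diag}(x)\,\xi = M\xi$, where $M := H\operatorname{diag}(x)$ has columns $M^{(j)} = x_j H^{(j)}$. Then $f$ is convex, and $|f(\xi) - f(\xi')| \le \|M(\xi-\xi')\|_4 \le \|M\|_{2\to 4}\|\xi-\xi'\|_2$, so $\|f\|_L \le \|M\|_{2\to 4}$. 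Talagrand's inequality thus reduces the lemma to two estimates: an upper bound on $\|M\|_{2\to 4}$ and one on $\mathbb{E} f(\xi)$.

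The key estimate is $\|M\|_{2\to 4} \le d^{-1/4}\|x\|_4$. The naive bound $\|Mz\|_4 \le \|Mz\|_2 = \|\operatorname{diag}(x)z\|_2 \le \|x\|_\infty\|z\|_2$ only gives $\|M\|_{2\to 4} \le \|x\|_\infty$, which is off by a factor of order $d^{1/4}$; the Walsh--Hadamard structure must be exploited. Since $d = 2^m$ and $H_{ij} = d^{-1/2}(-1)^{\langle i,j\rangle}$, expanding the fourth power and using $\sum_i H_{ij}H_{ij'}H_{il}H_{il'} = d^{-1}\mathbbm{1}[\,j\oplus j'\oplus l\oplus l' = 0\,]$ gives, for any $w \in \mathbb{R}^d$,
\[
\|Hw\|_4^4 \;=\; \frac{1}{d}\sum_{t\in\{0,1\}^m}\Big(\sum_{j} w_j\, w_{j\oplus t}\Big)^2 .
\]
Taking $w = x\odot z$ (entrywise product) and applying Cauchy--Schwarz in $j$ yields $\big(\sum_j x_j z_j\, x_{j\oplus t}z_{j\oplus t}\big)^2 \le \big(\sum_j x_j^2 x_{j\oplus t}^2\big)\big(\sum_j z_j^2 z_{j\oplus t}^2\big)$, and a second Cauchy--Schwarz (after reindexing $j\oplus t$) gives $\sum_j x_j^2 x_{j\oplus t}^2 \le \|x\|_4^4$ for every $t$. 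Summing over $t$ and using $\sum_t\sum_j z_j^2 z_{j\oplus t}^2 = \|z\|_2^4$, we obtain $\|Mz\|_4^4 = \|H(x\odot z)\|_4^4 \le d^{-1}\|x\|_4^4\|z\|_2^4$, which is the claim. (An alternative is to write $\|M\|_{2\to 4} = \|\operatorname{diag}(x)H\|_{4/3\to 2}$ and combine H\"older's inequality with the Riesz--Thorin/Hausdorff--Young bound $\|Hz\|_4 \le d^{-1/4}\|z\|_{4/3}$.)

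For the mean I would use $\mathbb{E} f(\xi) \le \big(\mathbb{E}\|HDx\|_4^4\big)^{1/4}$ together with the Rademacher fourth-moment identity $\mathbb{E}\big(\sum_j a_j\xi_j\big)^4 = 3\big(\sum_j a_j^2\big)^2 - 2\sum_j a_j^4 \le 3\big(\sum_j a_j^2\big)^2$. With $a_j = H_{ij}x_j$ we have $\sum_j a_j^2 = d^{-1}\|x\|_2^2 = d^{-1}$, so each of the $d$ coordinates of $HDx$ contributes at most $3d^{-2}$ to the fourth moment, giving $\mathbb{E}\|HDx\|_4^4 \le 3d^{-1}$ and hence $\mathbb{E} f(\xi) \le 3^{1/4}d^{-1/4}$.

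To finish, note that since $\mathbb{E} f(\xi) \le 3^{1/4}d^{-1/4}$, the event $\big\{\|HDx\|_4 > (3^{1/4}+t)d^{-1/4}\big\}$ is contained in $\big\{f(\xi) - \mathbb{E} f(\xi) > t d^{-1/4}\big\}$. Applying Talagrand's inequality with deviation $t d^{-1/4}$ and Lipschitz constant $\|f\|_L \le d^{-1/4}\|x\|_4$ then gives
\[
\prob{\|HDx\|_4 > (3^{1/4}+t)d^{-1/4}} \;\le\; 2\exp\!\left(-\frac{(t d^{-1/4})^2}{2\,(d^{-1/4}\|x\|_4)^2}\right) \;=\; 2\exp\!\left(-\frac{t^2}{2\|x\|_4^2}\right).
\]
The main obstacle is the operator-norm bound $\|M\|_{2\to 4} \le d^{-1/4}\|x\|_4$: every crude estimate loses a power of $d^{1/4}$, so one must genuinely use the Hadamard structure (the autocorrelation identity above, or a Hausdorff--Young bound) to get the sharp dependence that makes the final exponent come out as $t^2/(2\|x\|_4^2)$ rather than the weaker $t^2/(2\sqrt d\,\|x\|_4^2)$.
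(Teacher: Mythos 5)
Your proposal is correct, and its skeleton coincides with the paper's proof: both apply Talagrand's inequality (Theorem~\ref{theo: talagrand}) to the same convex function $f(\xi)=\|M\xi\|_4$ with $M=H\operatorname{diag}(x)$, reduce everything to the bound $\|M\|_{2\to4}\le d^{-1/4}\|x\|_4$ together with $\mathbb{E}\|HDx\|_4\le 3^{1/4}d^{-1/4}$, and then read off the tail with deviation $t\,d^{-1/4}$. Where you diverge is in how the two ingredients are obtained. The paper bounds the Lipschitz constant by duality, $\|M\|_{2\to4}=\|M^T\|_{4/3\to2}\le\|x\|_4\,\|H^T\|_{4/3\to4}$, and then invokes Riesz--Thorin interpolation between $\|H\|_{2\to2}=1$ and $\|H\|_{1\to\infty}=d^{-1/2}$ to get $\|H^T\|_{4/3\to4}\le d^{-1/4}$; you instead prove the same bound by a direct, self-contained computation using the character identity $\sum_i H_{ij}H_{ij'}H_{il}H_{il'}=d^{-1}\mathbbm{1}[\,j\oplus j'\oplus l\oplus l'=0\,]$, the autocorrelation formula $\|Hw\|_4^4=d^{-1}\sum_t\bigl(\sum_j w_jw_{j\oplus t}\bigr)^2$, and two applications of Cauchy--Schwarz (and you correctly note the interpolation route as an alternative). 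Your argument is more elementary and makes explicit use of the group structure of the Walsh--Hadamard matrix, at the cost of being specific to $q=4$ and to Hadamard-type matrices, whereas the interpolation argument generalizes immediately to other exponents and to any orthogonal matrix with small $\|H\|_{1\to\infty}$. You also supply a proof of the mean estimate $\mathbb{E}\|HDx\|_4\le 3^{1/4}d^{-1/4}$ via Jensen and the Rademacher fourth-moment identity $\mathbb{E}\bigl(\sum_j a_j\xi_j\bigr)^4\le 3\bigl(\sum_j a_j^2\bigr)^2$ with $a_j=H_{ij}x_j$, a step the paper simply cites as well known; all of these computations check out, and the final assembly (containment of the event in $\{f-\mathbb{E}f>t\,d^{-1/4}\}$ and cancellation of the $d^{-1/4}$ factors in the exponent) matches the paper exactly.
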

	\begin{proof}
		The result follows from Talagrand's concentration inequality (Theorem~\ref{theo: talagrand}) and the well-known estimate $\mathbb{E}\|HDx\|_4 \leq 3^{1/4}d^{-1/4}$. Consider the convex Lipschitz function $f( \xi) = \|M'  \xi\|_4$, where $M^{\prime(j)} = H^{(j)} x_j$. To bound the Lipschitz constant, we note that
		\[
		\|M'\|_{2 \to 4} = \|M'^T\|_{4/3 \to 2} \leq \|{  x}\|_4 \cdot \|H^T\|_{4/3 \to 4}.
		\]
		By the Riesz-Thorin interpolation theorem, using that $\|H\|_{2 \to 2} = 1$ and $\|H\|_{1 \to \infty} = d^{-1/2}$, we obtain
		\[
		\|H^T\|_{4/3 \to 4} \leq d^{-1/4},
		\]
		which yields the desired sub-gaussian tail bound.
	\end{proof}
	
\begin{remark}
	A single application of Lemma \ref{lem: talagrand hadamard} only guarantees $\|HDx\|_4= \mathcal{O}(k^{1/2} d^{-1/4})$ with high probability, which is insufficient for our concentration bounds. Applying the transformation twice brings it down to $\mathcal{O}(d^{-1/4})$. This explains the use of the composition $HD_2HD_3$ in our embedding.
\end{remark}

\subsection{Expected value of the $\ell_p$ norm.}

The next step is to analyze the expected value of the projected $\ell_p$ norm $\|k^{-1/p}AD{  x}\|_p$ for $p \in [1,2]$. As above, let ${x} \in \mathbb{R}^d$ with $\|{  x}\|_2 = 1$, let $A$ be a $k \times d$ signed matrix (i.e., entries are $\pm 1$), and let $D$ be a $d \times d$ diagonal matrix whose diagonal entries are independent Rademacher random variables $\xi_j$ (i.e., $\xi_j = \pm 1$ with probability $1/2$).

Unlike the $\ell_2$ norm, the expectation of $\|k^{-1/p}AD{  x}\|_p$ generally depends on the vector ${  x}$. However, the next result shows that when the mass of ${x}$ is approximately uniformly distributed across its coordinates (i.e., when ${x}$ is ``flat''), this dependence becomes negligible. In such cases, we will show that the $p$-th moment of $\|k^{-1/p}AD{  x}\|_p$ is close to the $p$-th moment of a standard gaussian, with the deviation controlled by the $\ell_3$ norm of ${x}$.

\begin{lemma}\label{lem: expectation}
	Let $A$ be a $k\times d$ signed matrix, $D$ a $d\times d$ Rademacher diagonal matrix, and $x \in \mathbb{R}^d$ with $\|x\|_2=1$, and $\Phi=k^{-1/p}AD$. Then 
	\[ \left | (\ex \| \Phi x\|_p^p)^{1/p} - \beta_p\right | \leq (5C_0\| x\|_3^3)^{1/p},\]
	where $\beta_p^p=\mathbb{E}|Z|^p$ with $Z$ a standard Gaussian, and $C_0$ is the absolute constant from Lemma \ref{lem: berry-esseen}.
\end{lemma}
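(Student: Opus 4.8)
The plan is to exploit the fact that each coordinate of $\Phi x$ is, up to the scaling $k^{-1/p}$, a sum of independent Rademacher-weighted terms, so that a Berry--Esseen type bound lets me replace it by a Gaussian. Fix a row index $i$ and write $S_i = \langle (AD)_{(i)}, x\rangle = \sum_{j=1}^d A_{i,j}\xi_j x_j$. Since $A$ is signed and the $\xi_j$ are independent Rademachers, the summands $A_{i,j}\xi_j x_j$ are independent, mean zero, with variance $x_j^2$ (because $A_{i,j}^2 = 1$), so $\operatorname{Var}(S_i) = \|x\|_2^2 = 1$, and the third absolute moment of the $j$-th summand is $|x_j|^3$, giving $\sum_j |x_j|^3 = \|x\|_3^3$. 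By the Berry--Esseen theorem (Lemma~\ref{lem: berry-esseen}), $\sup_{t}|\mathbb{P}(S_i \le t) - \Phi_{\mathrm{Gauss}}(t)| \le C_0 \|x\|_3^3$. The key point is that this bound is \emph{uniform over $i$}, since it depends only on the variances and third moments of the summands, which are the same for every row.

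Next I would pass from the CDF bound to a bound on $p$-th moments. We have $\mathbb{E}|S_i|^p = \int_0^\infty p t^{p-1}\,\mathbb{P}(|S_i| > t)\,dt$, and similarly for $|Z|^p$ with $Z\sim\mathcal{N}(0,1)$; subtracting, $|\mathbb{E}|S_i|^p - \beta_p^p| \le \int_0^\infty p t^{p-1}\,|\mathbb{P}(|S_i|>t) - \mathbb{P}(|Z|>t)|\,dt$. The integrand is controlled by $2C_0\|x\|_3^3$ pointwise from Berry--Esseen, but that alone does not give an integrable bound, so I would split the integral at some threshold $T$: on $[0,T]$ use the Berry--Esseen bound to get $\le 2C_0\|x\|_3^3 T^p$, and on $[T,\infty)$ bound $\mathbb{P}(|S_i|>t)$ and $\mathbb{P}(|Z|>t)$ separately by their own tail estimates. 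For the Gaussian tail this is standard; for $S_i$ I would use that $S_i$ is a Rademacher chaos / Lipschitz function of a Rademacher vector (its "Lipschitz constant" in $\xi$ is $\|x\|_2 = 1$) so Talagrand's inequality, Theorem~\ref{theo: talagrand}, or a direct Hoeffding bound gives $\mathbb{P}(|S_i|>t)\le 2e^{-t^2/2}$. Optimizing the split (e.g.\ $T \asymp \sqrt{\log(1/\|x\|_3^3)}$) yields $|\mathbb{E}|S_i|^p - \beta_p^p| \le 5C_0\|x\|_3^3$ for $p\in[1,2]$ after absorbing logarithmic factors and numerical constants into the stated constant; alternatively, since $\|x\|_3^3 \le \|x\|_2^2 = 1$ is bounded, one can even afford a cruder bound. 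Since $\mathbb{E}\|\Phi x\|_p^p = k^{-1}\sum_{i=1}^k \mathbb{E}|S_i|^p$ and every term satisfies the same estimate, averaging over $i$ gives $|\mathbb{E}\|\Phi x\|_p^p - \beta_p^p| \le 5C_0\|x\|_3^3$.

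Finally I would convert the bound on $p$-th moments into the claimed bound on $(\mathbb{E}\|\Phi x\|_p^p)^{1/p} - \beta_p$. Using the elementary inequality $|a^{1/p} - b^{1/p}| \le |a-b|^{1/p}$ for $a,b\ge 0$ and $p\ge 1$ (concavity/subadditivity of $t\mapsto t^{1/p}$), with $a = \mathbb{E}\|\Phi x\|_p^p$ and $b = \beta_p^p$, we get $|(\mathbb{E}\|\Phi x\|_p^p)^{1/p} - \beta_p| \le (5C_0\|x\|_3^3)^{1/p}$, which is exactly the statement.

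The main obstacle is the moment-transfer step: Berry--Esseen controls the CDF uniformly but the discrepancy in $p$-th moments requires integrating against $t^{p-1}$, so one must combine the CDF bound on a bounded window with genuine sub-Gaussian tail control of $S_i$ outside it, and then optimize the cutoff to land on a clean bound of the form $(\text{const})\cdot\|x\|_3^3$ with an explicit constant. Keeping the constant at $5C_0$ (rather than something with extra $\log$ factors) is the delicate part and may require using that $p \le 2$ and $\|x\|_3^3 \le 1$ to control the logarithmic term.
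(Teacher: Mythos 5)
There is a genuine gap, and it sits exactly where you flagged the difficulty: the moment-transfer step. You quote Lemma~\ref{lem: berry-esseen} as a \emph{uniform} bound $\sup_t |F(t)-\Phi(t)| \le C_0\|x\|_3^3$, and then compensate for the non-integrability of $t^{p-1}$ by splitting at a threshold $T$ and using sub-Gaussian tails beyond $T$. But that route cannot deliver the stated constant $5C_0$: to make the tail contribution of order $\|x\|_3^3$ you must take $T \asymp \sqrt{\log(1/\|x\|_3^3)}$, and then the window $[0,T]$ contributes $\approx 2C_0\|x\|_3^3\,T^p$, i.e.\ an extra factor $\log^{p/2}(1/\|x\|_3^3)$. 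This factor is \emph{not} an absolute constant and cannot be absorbed: it diverges as $\|x\|_3^3 \to 0$, and that is precisely the regime in which the lemma is applied (after the Hadamard preconditioning one has $\|x\|_3^3 \lesssim \|x\|_4^2 \lesssim d^{-1/2}$, so the log factor grows like $\log d$). The remark that ``$\|x\|_3^3 \le 1$ so one can afford a cruder bound'' does not help, since boundedness of $\|x\|_3^3$ does not bound $\log(1/\|x\|_3^3)$. So your argument proves a weaker statement, $|\mathbb{E}\|\Phi x\|_p^p - \beta_p^p| \lesssim C_0\|x\|_3^3\log^{p/2}(1/\|x\|_3^3)$, not the lemma as stated.

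The fix is to use the lemma the paper actually invokes: Lemma~\ref{lem: berry-esseen} is a \emph{non-uniform} Berry--Esseen bound, with the decaying factor
\[
|F(t)-\Phi(t)| \le \frac{C_0}{1+|t|^3}\,\|x\|_3^3 ,
\]
which you dropped. With this decay, writing $\mathbb{E}|Y_1|^p - \beta_p^p$ as $\int_0^\infty p s^{p-1}\bigl(F(-s)+1-F(s)-\Phi(-s)-1+\Phi(s)\bigr)\,ds$ gives directly
\[
\bigl|\mathbb{E}|Y_1|^p - \beta_p^p\bigr| \le 2pC_0\|x\|_3^3 \int_0^\infty \frac{s^{p-1}}{1+s^3}\,ds \le 5C_0\|x\|_3^3
\]
for $p\in[1,2]$, with no splitting, no separate tail estimates, and no logarithmic loss. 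The rest of your argument (identical distribution of the coordinates $Y_i$ so that $\mathbb{E}\|\Phi x\|_p^p = \mathbb{E}|Y_1|^p$, and the elementary inequality $|a^{1/p}-b^{1/p}|\le|a-b|^{1/p}$ to pass from $p$-th moments to the claim) coincides with the paper's proof and is fine.
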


Lemma \ref{lem: expectation} follows from a quantitative version of the Central Limit Theorem: the random variable $Y_i = \sum_j A_{i,j} \xi_j x_j$, which corresponds to the $i$-th coordinate of $ADx$, approximates a standard gaussian when the coordinates of $x$ are well-spread. Hence, $\mathbb{E}|Y_i|^p \approx \mathbb{E}|Z|^p$. The following Berry-Esseen type result provides a quantitative measure of this approximation.

\begin{lemma}[Corollary 17.7 in \cite{Bhattacharya1976normal}]\label{lem: berry-esseen}
	Let $X_1, \dots, X_d$ be independent symmetric random variables with $\sum_{j=1}^d \mathbb{E}(X_j^2)= 1$. Let $F$ be the distribution function of $X_1 + \cdots + X_d$, and let $\Phi(t) = \frac{1}{\sqrt{2\pi}} \int_{-\infty}^t e^{-x^2/2} \, dx$ be the distribution function of the standard Gaussian. Then
	\[
	|F(t) - \Phi(t)| \leq \frac{C_0}{1 + |t|^3} \sum_{j=1}^d \mathbb{E}(|X_j|^3) \quad \forall \, t \in \mathbb{R},
	\]
	where $C_0$ is an absolute constant.
\end{lemma}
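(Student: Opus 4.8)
This is a classical non‑uniform Berry–Esseen estimate, and the route I would take is the standard one: combine a uniform bound on a bounded range with a separate non‑uniform refinement in the tails, using the symmetry hypothesis to keep every characteristic function real and even. Write $S = X_1 + \cdots + X_d$, set $\sigma_j^2 = \mathbb{E}X_j^2$ (so $\sum_j \sigma_j^2 = 1$), $\gamma_j = \mathbb{E}|X_j|^3$, and $L = \sum_j \gamma_j$. Symmetry forces all odd moments of $X_j$ to vanish, so each characteristic function $\phi_j(\xi) = \mathbb{E}\cos(\xi X_j)$ is real, even, and lies in $[-1,1]$; the characteristic function of $S$ is $\phi = \prod_j \phi_j$ and that of the standard Gaussian is $\psi(\xi) = e^{-\xi^2/2}$. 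Lyapunov's inequality gives $\sigma_j^3 \le \gamma_j$, hence $\max_j \sigma_j^2 \le L^{2/3}$ and $\sum_j \sigma_j^3 \le L$, which will let me truncate Fourier‑inversion integrals at a cutoff $T \asymp L^{-1/3}$.

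\textbf{Step 1 (bounded range).} For $|t| \le 1$ one has $C_0/(1+|t|^3) \ge C_0/2$, so on this range it suffices to prove the ordinary Berry–Esseen bound $\sup_t |F(t) - \Phi(t)| \le C L$. This is textbook: Esseen's smoothing inequality reduces it to estimating $\int_{|\xi| \le T} |\phi(\xi) - \psi(\xi)|/|\xi|\,d\xi$, and on $|\xi| \le T = cL^{-1/3}$ the elementary bounds $|\phi_j(\xi)| \le 1 - \tfrac12\sigma_j^2\xi^2 + \tfrac16\gamma_j|\xi|^3 \le e^{-\sigma_j^2\xi^2/4}$ give $|\phi(\xi)| \le e^{-\xi^2/4}$, while a Taylor expansion yields $|\phi(\xi) - \psi(\xi)| \lesssim L|\xi|^3 e^{-\xi^2/4}$; integrating gives the claim. (If $L$ exceeds a small absolute constant the uniform claim is trivial since $|F-\Phi| \le 1$, so $L$ may be taken as small as needed.)

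\textbf{Step 2 (the tail gain).} The point is that for $|t|$ large one cannot bound $1-F(t)$ and $1-\Phi(t)$ separately: each, multiplied by $|t|^3$, is only $O(1)$ rather than $O(L)$, so the factor $(1+|t|^3)^{-1}$ must come from cancellation between the two tails. I would obtain it via a \emph{non‑uniform smoothing inequality}: after integrating by parts three times, $|t|^3\,|F(t) - \Phi(t)|$ is, up to boundary and remainder terms, controlled by $\int_{|\xi|\le T}\big|\tfrac{d^3}{d\xi^3}\big(\tfrac{\phi(\xi)-\psi(\xi)}{i\xi}\big)\big|\,d\xi$ plus contributions from $|\xi| > T$. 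Expanding the third derivative of the product $\phi = \prod_j\phi_j$ and using $|\phi_j'(\xi)| \le \sigma_j^2|\xi|$, $|\phi_j''(\xi)| \le \sigma_j^2$, $|\phi_j'''(\xi)| \le \gamma_j$ together with the decay $|\phi(\xi)| \le e^{-\xi^2/4}$ shows this integral is $\lesssim L$. The piece coming from $|\xi| > T$ is handled by a truncation argument in the probabilistic picture: replacing each $X_j$ by $X_j\mathbf{1}\{|X_j|\le \eta\}$ with $\eta \asymp |t|$, the truncation error is controlled by $\sum_j \mathbb{P}(|X_j|>\eta) \le L/\eta^3 \lesssim L/|t|^3$, and a Bernstein/Cramér‑tilt estimate for the bounded truncated variables shows their contribution to $(1+|t|^3)\,\mathbb{P}(S>t)$ beyond the Gaussian main term is $O(L)$.

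\textbf{Main obstacle.} Step 1 is routine; the real work is Step 2, where extracting the extra factor $(1+|t|^3)^{-1}$ forces one either to push through the non‑uniform smoothing inequality with the full bookkeeping of three derivatives of a $d$‑fold product of characteristic functions, or to run a truncation‑plus‑conjugate‑distribution comparison of the two tails directly. In both approaches the symmetry assumption is what keeps constants clean, since vanishing odd moments make $\phi$ real and even and eliminate the first Edgeworth correction. Since the statement is exactly Corollary 17.7 of \cite{Bhattacharya1976normal}, in the actual write‑up I would simply cite it, but the above is the proof I would reconstruct.
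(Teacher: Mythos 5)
The paper offers no proof of this lemma at all: it is imported verbatim as Corollary 17.7 of \cite{Bhattacharya1976normal}, so your closing decision to simply cite that result is exactly what the paper does. Your two-step sketch (uniform Berry--Esseen on $|t|\le 1$ via Esseen smoothing, plus the non-uniform tail gain via a smoothed/truncated comparison of the two tails) is a faithful outline of the classical argument behind that corollary, but none of it is needed here.
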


\begin{proof}[Proof of Lemma \ref{lem: expectation}]
	Observe that it suffices to prove that
	\begin{equation}\label{eq: p moment} 
		\left | \ex \| \Phi x\|_p^p - \beta_p^p\right | \leq 5C_0\|{  x}\|_3^3
	\end{equation}
	since the inequality 
\[
|a^{\alpha} - b^{\alpha}| \le |a - b|^{\alpha}, \qquad a,b \ge 0,\ \alpha \in (0,1],
\]
implies that
	\[ \left | (\mathbb{E}\|\Phi x\|_p^p)^{1/p} - \beta_p \right | \leq \left |\mathbb{E}\|\Phi x\|_p^p - \beta_p^p \right |^{1/p}.\]
    
	To prove (\ref{eq: p moment}), note that the random variables $Y_i = \sum_{j=1}^d A_{i,j} \xi_j x_j$, which represent the coordinates of $ADx$, are identically distributed. Therefore,  $\mathbb{E}\|k^{-1/p}AD{  x}\|^p_p=\mathbb{E}|Y_1|^p$.
    
	Define $X_j = A_{1,j} x_j \xi_j$ for $j = 1, \dots, d$, so that $Y_1 = \sum_{j=1}^d X_j$ is a sum of independent, symmetric random variables. Since $\sum_j \mathbb{E}(X_j^2) = 1$ and $\mathbb{E}(|X_j|^3) = |x_j|^3$, Lemma~\ref{lem: berry-esseen} gives 
	\begin{equation}\label{eq: berry essen bound} |F(t) - \Phi(t)| \leq \frac{C_0}{1 + |t|^3} \|x\|_3^3 \quad \forall \, t \in \mathbb{R}, \end{equation}
	where $F$ is the distribution function of $Y_1$, and $\Phi$ is the distribution function of a standard Gaussian. 
	
	Recall that if $Z$ follows a standard Gaussian distribution, we have defined $\beta_p^p=\mathbb{E}|Z|^p$. We can use (\ref{eq: berry essen bound}) to estimate the difference between $\beta_p^p$ and $\mathbb{E}|Y_1|^p$. Indeed, note that
	\[
	\left |\mathbb{E}|Y_1|^p - \beta_p^p\right | = \left| \int |t|^p \, dF(t) - \int |t|^p \, d\Phi(t) \right|.
	\]
	Integrating by parts, we have
	\begin{align*} \int |t|^p\, dF(t) &= \int \int_0^{|t|} ps^{p-1}ds \, dF(t) = \int_0^\infty ps^{p-1} \int_{|t|\geq s}  dF(t) \, ds \\
		&= \int_0^\infty ps^{p-1}(F(-s) + 1- F(s)) \, ds.
	\end{align*}
	Moreover, the same identity holds for $\Phi$. Therefore, we deduce that
	\begin{align*} \left | \int |t|^p d F(t) - \int |t|^p d\Phi(t)\right | &\leq  \int_{0}^\infty ps^{p-1}|F(-s) - \Phi(-s)| \,ds  +  \int_{0}^\infty ps^{p-1}  |F(s) - \Phi(s)| \,ds 
	\end{align*}
	Applying \eqref{eq: berry essen bound}, and recalling that $p \le 2$, we conclude
	\[
	\left |\mathbb{E}|Y_1|^p - \beta_p^p \right | \leq 2pC_0 \|{  x}\|_3^3 \int_{0}^\infty \frac{s^{p-1}}{1 + s^3} \, ds \leq \frac{8\pi C_0}{3\sqrt{3}} \|{x}\|_3^3 \leq 5C_0\|x\|_3^3,
	\]
	where the last inequality follows from bounding the integral.  
\end{proof}

%Our final ingredient is the following simple observation. Nonnegative sub-gaussian random variables not only concentrate around its mean, but also around its $p$-th moment.

%\begin{lemma}[Exercise 5.2.4 in \cite{vershynin2018high-dimensional}]\label{lem: exercise}
%	Let $Z$ be a random variable with $\mathbb{E} Z \geq 0$ and $p\geq 1$. Then,
%	\[ \|Z - \|Z\|_p \|_{\Psi_2} \leq 7 \sqrt{p} \|Z-\mathbb{E}Z\|_{\Psi_2}.\]
%\end{lemma}

%\begin{proof}
%	By the triangle inequality, we have
%	\[ \|Z - \|Z\|_p \|_{\Psi_2} \leq \|Z - \mathbb{E}Z \|_{\Psi_2} + \| \mathbb{E}Z - \|Z\|_p \|_{\Psi_2}.\]
%	Since $\mathbb{E}Z - \|Z\|_p$ is deterministic, we obtain
%	\[ \| \mathbb{E}Z - \|Z\|_p \|_{\Psi_2} \leq 2|\mathbb{E}Z - \|Z\|_p| \leq 2 \|Z-\mathbb{E}Z\|_p \leq 6\sqrt{p}\|Z-\mathbb{E}Z\|_{\Psi_2}, \]
%	where in the last step we have used the moments characterization of sub-gaussian random variables ( Proposition 2.5.2 in \cite{vershynin2018high-dimensional}).
%\end{proof}

	\section{Proof of main results}\label{sec 4}

    In this section, we prove Theorems~\ref{theo: main 2} and~\ref{theo: main}. We begin by formally defining the embedding~$\Psi$ used in both results.
	
	Let $X \subseteq \mathbb{R}^d$ be a finite set with $n$ elements, and let $\varepsilon \in (0,1)$. Take 
	\[k \geq \varepsilon^{-2}\max\{4941,50C_0\},\]
	where $C_0$ is the absolute constant appearing in Lemma \ref{lem: berry-esseen}, and assume that $k\leq d^{1/4}$. Let \(A\) be a \(k \times d\) \(4\)-wise independent matrix, let $D_1$, $D_2$, $D_3$ be independent Rademacher diagonal matrices, and let \(H\) be the \(d \times d\) Walsh-Hadamard matrix. The embedding \(\Psi \colon \mathbb{R}^d \to \mathbb{R}^k\) is defined by
	\begin{equation}\label{eq: embedding}
	\Psi {  x} = k^{-1/p}\beta_p^{-1} A D_1HD_2HD_3 {  x} \quad \forall \, {  x} \in \mathbb{R}^d.
	\end{equation}
	% CHANGE: Added explicit definition of \(\Psi\).
	
	The construction is motivated as follows: 
	
	For vectors $x$ with well-spread coordinates, Corollary \ref{cor: concentration simple} ensures that \(A D_1\) preserves $\ell_p$ distances. The role of the isometry $HD_2HD_3$, is to guarantee this flatness condition by reducing the \(\ell_4\) norm of $x$ to \(\mathcal{O}(d^{-1/4})\) with high probability. The scaling factor \(k^{-1/p}\beta_p^{-1}\) normalizes the embedding so that \(\mathbb{E} \|\Psi {x}\|_p \approx 1\) for unit vectors.
	
	With this setup, we are finally ready to show that $\Psi$ preserves the Euclidean norms of vectors in \(\mathbb{R}^d\) via the \(\ell_p\) norm with high probability. % CHANGE: Added transition to the analysis.
    
	\begin{proof}[Proof of Theorem \ref{theo: main 2}]
		Let ${  x} \in \mathbb{R}^d$ with $\|{  x}\|_2=1$ and write $y=HD_3x$. Applying Lemma \ref{lem: talagrand hadamard} with $t=\sqrt{k}$ ensures that $\|{  y}\|_4 \leq 2k^{1/2} d^{-1/4}$ with probability at least $1-2e^{-k/2}$. Next, write $z=HD_2 y$ and apply Lemma \ref{lem: talagrand hadamard} again, this time with $t=3^{1/4}$, to obtain that $\|z\|_4 \leq 2\cdot 3^{1/4}d^{-1/4}$ with probability at least $1-4e^{-k/5}$, where we have used $k \leq d^{1/4}$.
        
		Write $Z=\|\Psi x\|_p$. By Lemma \ref{lem: expectation} we have
		\[ \left |\|Z\|_p - 1\right | \leq \beta_p^{-1}(5C_0\|z\|_3^3)^{1/p} \leq 5C_0\beta_p^{-1} \|z\|_4, \]
        where we have used that $\|z\|_3^3\leq \| z \|_2 \|z\|_4^2 = \|z\|_4^2$ by the Cauchy-Schwarz inequality, combined with the fact that $\|z\|_2=1$ since $H D_2 H D_3$ is unitary. Note that $\beta_p\geq \beta_1 = \sqrt{2/\pi}$ and recall that $d^{-1/4} \leq k^{-1} \leq \varepsilon^2/50C_0$.
		Then, we conclude that if $\|z\|_4 \leq 2\cdot 3^{1/4} d^{-1/4}$, which happens with probability at least $1-4e^{-k/5}$, then we have
		\begin{equation}\label{eq: final expectation}
			\left |\|Z\|_p - 1\right |\leq \frac{1}{3} \varepsilon^2 \leq \frac{1}{3} \varepsilon.
		\end{equation}
		As before, condition on the event $\|z\|_4 \leq 2\cdot 3^{1/4} d^{-1/4}$. Recall that Corollary \ref{cor: concentration simple} provides a tail bound for $Z - \mathbb{E}Z$, which implies it is sub-gaussian. It is standard that such tail bounds translate to moment bounds via the characterization of sub-gaussian norms. Specifically, Proposition 2.5.2 in \cite{vershynin2018high-dimensional} shows that if $\prob{|X| > t} \leq 2\exp(-t^2)$ for all $t\geq 0$, then $\|X\|_p \leq 3\sqrt{p}$ for all $p \geq 1$. Applying this to $X = (Z - \mathbb{E}Z)/K$ with $K^2 = \frac{2|z|_4^2 (3d)^{1/2}}{\beta_p k}$ yields
		\begin{equation}\label{eq: reverse triangle}
			|\mathbb{E}Z - \|Z\|_p| \leq \|Z-\mathbb{E}Z\|_p \leq 3\sqrt{p} \left (\frac{2\|z\|_4^2 3^{1/2}d^{1/2}}{\beta_p k}\right )^{1/2} \leq 3\sqrt{p} \left ( \frac{24}{\beta_p \sqrt{k}}\right)^{1/2}\leq \frac{3 \sqrt{61}}{\sqrt{k}} \leq \frac{\varepsilon}{3},
		\end{equation}
		for $k \geq 4941\varepsilon^{-2}$, where we have used $p \in [1,2]$ and $\beta_p \in [\sqrt{2/\pi},1]$. These estimations and the triangle inequality show that
		\[ |Z-1| \leq |Z-\mathbb{E}Z| + |\mathbb{E}Z - \|Z\|_p | + |\|Z\|_p - 1 | \leq |Z-\mathbb{E}Z| + \frac{2\varepsilon}{3}.\]
		Consequently, applying Corollary \ref{cor: concentration simple} and using $\beta_p\leq 1$ yields
		\[
			\prob{ |Z-1| >\varepsilon} \leq \prob{ |Z-\mathbb{E}Z|> \varepsilon/3} \leq 2 \exp\left( -\frac{k\varepsilon^2}{216} \right).
		\]
		Removing the conditioning, we conclude that
		\begin{equation}\label{eq: final bound}\prob{ \left| \|\Psi x\|_p - 1\right| >\varepsilon}\leq 2 \exp\left( -\frac{k\varepsilon^2}{216}\right ) + 4\exp\left (-\frac{k}{5}\right ) \leq 6\exp\left( -\frac{k\varepsilon^2}{216}\right ).\qedhere
		\end{equation}
		
		\end{proof}

		\begin{proof}[Proof of Theorem \ref{theo: main}]
			Without loss of generality we can assume $n\geq 3$. Thus, $k$ also satisfies the assumptions in Theorem \ref{theo: main 2}. Let \(T = \left\{ \frac{{  x} - {  y}}{\|{  x} - {  y}\|_2} : {  x}, {  y} \in X, {  x} \neq {  y} \right\}\). Since \(X\) has \(n\) elements, the set \(T\) has cardinality \(|T| \leq n^2\). By Theorem \ref{theo: main 2}, for any \({  x} \in T\), we have
			\[
			\prob{ \left| \|\Psi x\|_p - 1 \right| > \varepsilon } \leq 6 \exp\left( -\frac{k\varepsilon^2}{216} \right).
			\]
			Applying the union bound over all \({  x} \in T\), the probability that there exists \({  x} \in T\) with \(\left| \|\Psi x\|_p - 1 \right| > \varepsilon\) is at most
			\[
			|T| \cdot 6 \exp\left( -\frac{k\varepsilon^2}{216} \right) \leq 6n^2\exp\left( -\frac{k\varepsilon^2}{216} \right) \leq \rho
			\]
			as long as
			\[
			k \geq 216 \frac{\log(6\rho^{-1}n^2)}{\varepsilon^2}.
			\]
			
			Finally, \(\Psi\) is constructed using the same framework as in \cite{Ailon2008fast}, which involves fast matrix-vector multiplications with Walsh-Hadamard matrices, sparse Rademacher diagonal matrices, and a \(4\)-wise independent matrix. In particular, $\Phi x$ can be computed in \(\mathcal{O}(d \log k)\) time using the partial Hadamard transform and fast \(4\)-wise independent matrix multiplication techniques described in \cite{Ailon2008fast}.
		\end{proof}

		\section{Optimality of General Norm Embeddings}\label{sec 5}
		
		In this section, we establish a lower bound on the target dimension $k$ for embeddings that preserve Euclidean distances when measured via an arbitrary norm $\|\cdot\|$ in the target space. Given $\varepsilon \in (0,1)$, assume that for any set $X \subseteq \mathbb{R}^d$ with $n$ elements there is a map $f_X \colon \mathbb{R}^d \to \mathbb{R}^k$ that satisfies
		\begin{equation}\label{eq: bilip}
			(1-\varepsilon)\|x-y\|_2 \leq \|f_X(x)-f_X(y)\| \leq (1+\varepsilon)\|x-y\|_2 \quad \forall \, x,y \in X.
		\end{equation}
		Theorem \ref{theo: lower bound} states that the target dimension must satisfy $k=\Omega\left (\varepsilon^{-2} \log (\varepsilon^2 n)/\log(1/\varepsilon)\right )$. To achieve this lower bound, we adapt a combinatorial encoding technique of Larsen and Nelson~\cite{larsen2017optimality}, 
		which yields the optimal lower bound in the Euclidean case. 
		
		At a high level, we construct a large collection $\mathcal{P} = \{P_1, P_2, \ldots\}$ of $n$-point subsets of $\mathbb{R}^d$ that exhibit distinct geometric structures. Each geometry must be approximately preserved by its corresponding embedding \(f_{P_i}\) satisfying~\eqref{eq: bilip}. If the target dimension \(k\) is too small, the space \(\mathbb{R}^k\) cannot accommodate such a large number of geometrically distinct configurations, leading to a counting contradiction.
		
		Let $e_1,\ldots,e_d$ be the canonical basis of $\mathbb{R}^d$. Fix $s=\lfloor 1/(128\varepsilon^2)\rfloor$ and, for any $S\subseteq\{1,\ldots,d\}$ with $|S|=s$, define
		\[
		y_S = \frac{1}{\sqrt{s}}\sum_{j\in S} e_j.
		\]
		For any choice of $d$ sets $S_1,\ldots,S_d \subseteq \{1,\ldots,d\}$ of $s$ indices each, we consider a set 
		\begin{equation}\label{eq: P}
			P=\{0,e_1,\ldots,e_d,y_{S_1},\ldots,y_{S_d}\},
		\end{equation}
		and let $\mathcal{P}$ be the collection of all such sets.
		The key observation is that the Euclidean distances between $e_j$ and $y_S$ encode the indicator of $j\in S$, up to a gap of order $\varepsilon$.  
		If $f$ is a map satisfying~\eqref{eq: bilip}, these gaps are preserved when distances are measured in $\|\cdot\|$, allowing one to recover each subset $S_k$ from the image $f(P)$. By discretizing the image space via an $\varepsilon$-cover of the unit ball of $\|\cdot\|$, one obtains an injective encoding from $\mathcal{P}$ to $\{0,1\}^L$, where $L$ is the number of bits required to encode the $\varepsilon$-cover. Comparing the cardinalities of both sides yields the claimed lower bound.
		
		\begin{remark}
			Notice that for our construction to make sense we need $s \ge 1$, and thus 
			\[
			\varepsilon \le \frac{1}{\sqrt{128}}.
			\]
			This does not pose a problem for our lower bound. Indeed, for $\varepsilon > 1/\sqrt{128}$, Theorem~\ref{theo: lower bound} would aim to give $k = \Omega(\log n)$, which is straightforward. For example, if $d=n$, the image of the canonical vectors form a well-separated subset of $2B$, and standard packing arguments yield the bound. The proof of Theorem~\ref{theo: lower bound} also shows how the case $d \neq n$ can be reduced to this one. Thus, we may assume $\varepsilon \le 1/\sqrt{128}$ without loss of generality.
		\end{remark}

		Let $B$ denote the unit ball of $\|\cdot\|$ in $\mathbb{R}^k$.  Fix $N$ to be an $\varepsilon$-cover of $2B$ with respect to the norm $\|\cdot\|$, constructed using translates of $\varepsilon B$.  Let $P\in \mathcal{P}$ and $f \colon P \longrightarrow \mathbb{R}^k$ satisfy (\ref{eq: bilip}). Up to shifts, we may assume that $f(0)=0$.  Observe then that $f(P) \subseteq (1+\varepsilon)B\subseteq 2B$.  For each canonical vector $e_j$, choose $z_j \in N$ to be a representative of $f(e_j)$, i.e., $\|z_j - f(e_j)\| \le \varepsilon$. Likewise, for each $y_S \in P$, choose $z_S \in N$ satisfying $\|z_S - f(y_S)\| \le \varepsilon$. The following lemma shows that the collection of representatives $\{z_1, \dots, z_d, z_{S_1}, \dots, z_{S_d}\}$ uniquely determines the set $P$, as it encodes which indices $j$ belong to each subset $S$.

		\begin{lemma} \label{lem:gap_preserve_cover}
		With the notation above, for any $y_S \in P$, there exists a threshold $\tau \in \mathbb{R}$ such that for every $j \in \{1,\ldots,d\}$,
		\[
		j \in S \quad \mbox{if, and only if,} \quad \|z_j - z_S\| \le \tau.
		\]
		\end{lemma}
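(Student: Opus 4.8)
The plan is to first compute the Euclidean distances in the domain, then track how each step of passing to the image $f(P)$ and then to the cover $N$ perturbs them, and finally exhibit a threshold $\tau$ that separates the two regimes. In $\mathbb{R}^d$ one has $\|e_j - y_S\|_2^2 = 1 - 2\langle e_j, y_S\rangle + 1 = 2 - \tfrac{2}{\sqrt s}\mathbbm{1}[j\in S]$, so $\|e_j-y_S\|_2 = \sqrt{2}$ when $j\notin S$ and $\|e_j-y_S\|_2 = \sqrt{2 - 2/\sqrt s}$ when $j\in S$. These two values are separated by a gap of order $1/\sqrt s$, hence of order $\varepsilon$ by the choice $s=\lfloor 1/(128\varepsilon^2)\rfloor$; I would record a clean lower bound $\sqrt 2 - \sqrt{2-2/\sqrt s} \ge c/\sqrt s$ for an explicit constant $c$ (a first-order Taylor estimate of $\sqrt{\cdot}$ suffices, using $s\ge 1$).

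Next I would propagate these estimates through the embedding. Since $f$ satisfies \eqref{eq: bilip}, $\|f(e_j)-f(y_S)\|$ lies within a multiplicative $(1\pm\varepsilon)$ factor of $\|e_j-y_S\|_2$; then replacing $f(e_j)$ by $z_j$ and $f(y_S)$ by $z_S$ changes the norm by at most $2\varepsilon$ by the triangle inequality, since each representative is within $\varepsilon$ of its point. Writing $a_j := \|z_j - z_S\|$, this gives two-sided bounds: for $j\notin S$, $a_j \ge (1-\varepsilon)\sqrt 2 - 2\varepsilon$, while for $j\in S$, $a_j \le (1+\varepsilon)\sqrt{2-2/\sqrt s} + 2\varepsilon$. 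The lemma then reduces to checking that the upper bound for the ``in'' case is strictly below the lower bound for the ``out'' case, i.e.
\[
(1+\varepsilon)\sqrt{2 - \tfrac{2}{\sqrt s}} + 2\varepsilon \;<\; (1-\varepsilon)\sqrt 2 - 2\varepsilon,
\]
after which any $\tau$ in the open interval between these two quantities works.

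The main obstacle — really the only nontrivial point — is verifying this numerical separation, i.e.\ showing the $\Theta(1/\sqrt s) = \Theta(\varepsilon)$ gap genuinely dominates the $\Theta(\varepsilon)$ additive errors introduced by the bi-Lipschitz distortion and the two rounding steps. The constant $128$ in $s=\lfloor 1/(128\varepsilon^2)\rfloor$ is chosen precisely so that this holds with room to spare: from $s \le 1/(128\varepsilon^2)$ we get $1/\sqrt s \ge \sqrt{128}\,\varepsilon > 11\varepsilon$, and a short computation bounding $\sqrt 2 - \sqrt{2-2/\sqrt s} \ge \tfrac{1}{\sqrt 2}\cdot\tfrac{1}{\sqrt s}$ (valid since $2-2/\sqrt s \le 2$) shows the gap between the two Euclidean distances is at least $\tfrac{11}{\sqrt 2}\varepsilon > 7\varepsilon$, which comfortably exceeds the total perturbation $\le 2\varepsilon\sqrt 2 + 4\varepsilon < 7\varepsilon$ coming from distortion plus rounding (using $\varepsilon\le 1/\sqrt{128}$ and $\|e_j-y_S\|_2\le\sqrt2$ to bound the multiplicative errors additively). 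Once the inequality is in hand, one sets $\tau$ to be, say, the midpoint of the two bounds and reads off the equivalence.
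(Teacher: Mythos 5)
Your proposal is correct and follows essentially the same route as the paper: compute $d_{\rm in}=\sqrt{2-2/\sqrt{s}}$ and $d_{\rm out}=\sqrt{2}$, propagate through the bi-Lipschitz condition plus a $2\varepsilon$ rounding error from the cover, and verify the resulting intervals are disjoint using the $\Theta(1/\sqrt{s})\gtrsim\varepsilon$ gap guaranteed by the choice of $s$. The only cosmetic difference is that the paper bounds $d_{\rm out}-d_{\rm in}\ge 1/\sqrt{2s}\ge 8\varepsilon$ via the mean value theorem, while you obtain the same bound by rationalizing the difference of square roots; the final numerical comparison ($>7\varepsilon$ gap versus $<7\varepsilon$ total perturbation) is the same check the paper leaves as "routine."
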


		\begin{proof}
			Let $d_{\rm in} = \|e_j - y_S\|_2$ for $j \in S$ and $d_{\rm out} = \|e_j - y_S\|_2$ for $j \notin S$. A direct computation gives
			\[
			d_{\rm in} = \sqrt{2 - \frac{2}{\sqrt{s}}}, \quad d_{\rm out} = \sqrt{2}.
			\]
			By the JL condition~\eqref{eq: bilip}, we have
			\[
			\|f(e_j) - f(y_S)\| \in [(1-\varepsilon)d_{\rm in}, (1+\varepsilon)d_{\rm in}] \quad \text{if } j \in S,
			\]
			and
			\[
			\|f(e_j) - f(y_S)\| \in [(1-\varepsilon)d_{\rm out}, (1+\varepsilon)d_{\rm out}] \quad \text{if } j \notin S.
			\]
			Triangle inequality with the representatives $\|z_j - f(e_j)\| \le \varepsilon$ and $\|z_S - f(y_S)\| \le \varepsilon$ gives
			\[
			\|z_j - z_S\| \in [(1-\varepsilon)d_{\rm in} - 2\varepsilon, (1+\varepsilon)d_{\rm in} + 2\varepsilon] \quad \text{if } j \in S,
			\]
			and
			\[
			\|z_j - z_S\| \in [(1-\varepsilon)d_{\rm out} - 2\varepsilon, (1+\varepsilon)d_{\rm out} + 2\varepsilon] \quad \text{if } j \notin S.
			\]
			Our choice of $s$ guarantees that these intervals are disjoint, and thus any threshold $\tau$ in between separates the two cases. To show this, observe that by the mean-value theorem we have $d_{\rm out} - d_{\rm in}\geq 1/\sqrt{2s}$. Hence, taking $s\leq 1/(128\varepsilon^2)$ gives $d_{\rm out} - d_{\rm in}\geq 8\varepsilon$ and the rest is routine.
		\end{proof}

		For our $\varepsilon$-cover $N$ of $2B$ define a bijection
		\[
		b: N \longrightarrow \{0,1\}^{\lceil \log_2 |N| \rceil},
		\]
		so that $b(z)$ gives a binary representative of each $z \in N$.  For a set $P \in \mathcal{P}$, recall that $z_j \in N$ is a representative for $f_P(e_j)$ and $z_S \in N$ is a representative for $f_P(y_S)$, where $f_P$ satisfies \eqref{eq: bilip} with $f_P(0) = 0$. We define the encoding of $P$ by concatenating the bit strings of its representatives:
		\[
		\text{enc}(P) = \big(b(z_1), \dots, b(z_d), b(z_{S_1}), \dots, b(z_{S_d})\big) \in \{0,1\}^L,
		\]
		where $L = 2d \lceil \log_2 |N| \rceil$. From the previous lemma, we infer that two distinct sets $P$, $P' \in \mathcal{P}$ cannot be assigned to the same encoding.
		
		\begin{lemma} \label{lem:encoding_injective}
			The encoding map $\text{enc} \colon \mathcal{P} \longrightarrow \{0,1\}^L$ defined above is injective.
		\end{lemma}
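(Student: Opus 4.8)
The plan is to show that $\text{enc}$ is injective by proving that the encoding of a set $P \in \mathcal{P}$ uniquely determines the defining subsets $S_1, \dots, S_d$, and hence $P$ itself. Suppose two sets $P = \{0, e_1, \dots, e_d, y_{S_1}, \dots, y_{S_d}\}$ and $P' = \{0, e_1, \dots, e_d, y_{S'_1}, \dots, y_{S'_d}\}$ in $\mathcal{P}$ satisfy $\text{enc}(P) = \text{enc}(P')$. Since $b$ is a bijection, this means the tuples of representatives agree coordinate by coordinate: the representative $z_j$ for $f_P(e_j)$ equals the representative $z'_j$ for $f_{P'}(e_j)$ for every $j \in \{1,\dots,d\}$, and the representative $z_{S_m}$ for $f_P(y_{S_m})$ equals the representative $z'_{S'_m}$ for $f_{P'}(y_{S'_m})$ for every $m \in \{1,\dots,d\}$.

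Next I would fix an index $m$ and apply Lemma~\ref{lem:gap_preserve_cover} to both $P$ (with its map $f_P$) and $P'$ (with its map $f_{P'}$). For $P$, there is a threshold $\tau$ such that $j \in S_m \iff \|z_j - z_{S_m}\| \le \tau$; for $P'$, there is a threshold $\tau'$ such that $j \in S'_m \iff \|z'_j - z'_{S'_m}\| \le \tau'$. The crucial point — which I should emphasize — is that the inspection of the proof of Lemma~\ref{lem:gap_preserve_cover} shows $\tau$ can be taken to depend only on $s$ (e.g. $\tau = \tfrac{1}{2}\big((1-\varepsilon)d_{\rm in} - 2\varepsilon + (1-\varepsilon)d_{\rm out} + 2\varepsilon\big)$ or any value in the fixed gap), and in particular not on $P$; so $\tau = \tau'$. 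Since the representatives coincide, $\|z_j - z_{S_m}\| = \|z'_j - z'_{S'_m}\|$ for every $j$, and therefore the two indicator conditions pick out exactly the same set: $S_m = S'_m$. As $m$ was arbitrary, $S_m = S'_m$ for all $m$, hence $P = P'$, proving injectivity.

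The main (minor) obstacle is ensuring the threshold is genuinely uniform across all sets in $\mathcal{P}$: the quantities $d_{\rm in} = \sqrt{2 - 2/\sqrt{s}}$ and $d_{\rm out} = \sqrt{2}$ depend only on $s$, which is fixed once $\varepsilon$ is fixed, so the separating interval $\big[(1-\varepsilon)d_{\rm out} - 2\varepsilon,\ (1+\varepsilon)d_{\rm out} + 2\varepsilon\big]$ versus $\big[(1-\varepsilon)d_{\rm in} - 2\varepsilon,\ (1+\varepsilon)d_{\rm in} + 2\varepsilon\big]$ is the same for every $P$; one just picks $\tau$ to be the midpoint of the gap once and for all. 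I would also note explicitly that this uses the normalization $f_P(0) = 0$ and $f_{P'}(0) = 0$ only to place both images in $2B$ so that the same cover $N$ and the same bijection $b$ apply to both — the encoding construction is identical for every $P \in \mathcal{P}$, which is what makes comparing bit strings meaningful. With these points in place the argument is complete.
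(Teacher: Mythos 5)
Your proof is correct and follows essentially the same route as the paper: identical representatives together with the threshold test of Lemma~\ref{lem:gap_preserve_cover} recover each $S_m$, and your explicit observation that the threshold can be fixed once and for all (it depends only on $s$ and $\varepsilon$, hence is the same for $P$ and $P'$) is precisely the point the paper uses implicitly when it says the subsets are ``recovered identically from their representatives.'' One tiny slip in your illustrative choice: your formula simplifies to $\tau=(1-\varepsilon)(d_{\rm in}+d_{\rm out})/2$, which need not lie in the separating gap (the natural midpoint is $\tfrac12\big((1+\varepsilon)d_{\rm in}+2\varepsilon+(1-\varepsilon)d_{\rm out}-2\varepsilon\big)$), but since you allow ``any value in the fixed gap'' the argument is unaffected.
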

	
		\begin{proof}
			Suppose $\text{enc}(P) = \text{enc}(P')$ for two sets $P, P' \in \mathcal{P}$.  
			By definition of $\text{enc}$ and the bijection $b$, this implies that the corresponding representatives coincide:
			\[
			z_j = z'_j \quad \forall j \in \{1,\ldots,d\} \quad \mbox{and} \quad z_{S_k} = z'_{S_k} \quad \forall \, k \in \{1,\ldots,d\}.
			\]
			By Lemma~\ref{lem:gap_preserve_cover}, the distances between the $z_j$ and $z_{S_k}$ allow us to uniquely recover each subset $S_k$ in $P$ from the representatives.  
			Since the subsets in $P$ and $P'$ are recovered identically from their representatives, we must have $S_k = S'_k$ for all $k$, and hence $P = P'$.
		\end{proof}
		
		To establish Theorem \ref{theo: lower bound}, it remains to compare the cardinalities of $\mathcal{P}$ and $\{0,1\}^L$.
		
		\begin{proof}[Proof of Theorem~\ref{theo: lower bound}]
			We first study the critical case where the dimension satisfies 
			\[
			d = \frac{n-1}{2}.
			\]
			Under this assumption, the cardinality of any set $P \in \mathcal{P}$ is precisely $n$. Let $N$ be a minimal $\varepsilon$-cover of $2B$ with respect to $\|\cdot\|$. Thus, $|N| \le (6/\varepsilon)^k$ by standard volumetric arguments.  
			Then $\lceil \log_2 |N| \rceil = \mathcal{O}(k \log(1/\varepsilon))$, and the encoding length of each $P \in \mathcal{P}$ satisfies
			\[
			L = 2d \, \lceil \log_2 |N| \rceil = \mathcal{O}(d k \log(1/\varepsilon)).
			\]
			By Lemma~\ref{lem:encoding_injective}, the map $\text{enc} \colon \mathcal{P} \to \{0,1\}^L$ is injective, and thus $|\mathcal{P}| \le 2^L$. Notice that by the construction of $\mathcal{P}$, we have
			\[
			|\mathcal{P}| = \binom{d}{s}^d \ge \left(\frac{d}{s}\right)^{sd} \geq \left(C d \varepsilon^2 \right)^{cd/\varepsilon^2}
			\]
			for some absolute constants $c$, $C>0$. Comparing these two bounds gives
			\[
			2^{\mathcal{O}(d k \log(1/\varepsilon))} \geq \left(C d \varepsilon^2 \right)^{cd/\varepsilon^2}.
			\]
			Taking logarithms and solving for $k$ yields
			\[
			k = \Omega\Big( \varepsilon^{-2} \log (\varepsilon^2 n) / \log(1/\varepsilon) \Big).
			\]
			The remaining cases follow from the critical case $d=(n-1)/2$. 
            
			If $d > (n-1)/2$, we simply restrict our construction \eqref{eq: P} to the first $(n-1)/2$ coordinates, ignoring the rest so that $|P| = n$ remains true for all $P \in \mathcal{P}$.
            			
			If $d < (n-1)/2$, let $P \subseteq\mathbb{R}^{(n-1)/2}$ be a set of the form (\ref{eq: P}). By the Johnson-Lindenstrauss lemma, if $\varepsilon \geq \sqrt{\log n/d}$, there exists a map $h \colon \mathbb{R}^{(n-1)/2} \longrightarrow \mathbb{R}^d$ that approximately preserves all pairwise distances of $P$ up to distortion $\mathcal{O}(\varepsilon)$. Composing this map with a hypothetical embedding $f: \mathbb{R}^d \to \mathbb{R}^k$ satisfying the desired distortion condition (\ref{eq: bilip}) with $X = P$ gives
			\[
			f \circ h : \mathbb{R}^{(n-1)/2} \to \mathbb{R}^k,
			\]
			which is an embedding of $P$ with distortion $\mathcal{O}(\varepsilon)$. Up to constant factors, this imposes on $k$ the same lower bound as in the critical case $d=(n-1)/2$.
		\end{proof}

\bibliographystyle{plain}
\bibliography{bibliography}

\end{document}